\newcommand{\mfr}[1]{\ensuremath \mathfrak{#1}}
\newcommand{\mbb}[1]{\ensuremath \mathbb{#1}}
\newcommand{\mbf}[1]{\ensuremath \mathbf{#1}}
\newcommand{\mcl}[1]{\ensuremath \mathcal{#1}}
\newcommand{\msc}[1]{\ensuremath \mathscr{#1}}
\newcommand{\mrm}[1]{\ensuremath \mathrm{#1}}
\newcommand{\R}{\mathbb{R}}
\DeclareMathOperator{\SL}{SL}
\DeclareMathOperator{\SO}{SO}
\DeclareMathOperator{\Rc}{Rc}
\DeclareMathOperator{\id}{id}
\DeclareMathOperator{\tr}{tr}
\DeclareMathOperator{\Hess}{Hess}
\newcommand{\smfrac}[2]{{\textstyle{\frac{#1}{#2}}}}
\newcommand{\half}{{\smfrac{1}{2}}}
\numberwithin{equation}{section}
\theoremstyle{plain} 
\newtheorem{thm}[equation]{Theorem}
\newtheorem{lem}[equation]{Lemma}
\newtheorem{prop}[equation]{Proposition}
\theoremstyle{definition}
\theoremstyle{remark}
\newtheorem{rem}[equation]{Remark}
\newtheorem*{ack}{Acknowledgement}
\begin{document}

\author[M.~B.~Williams]{Michael Bradford Williams}
\address{Department of Mathematics, The University of California, Los Angeles}
\email{mwilliams@math.ucla.edu}
\urladdr{http://math.ucla.edu/~mwilliams}

\title{Stability of solutions of certain extended Ricci flow systems}
\date{}

\begin{abstract} 
We consider four extended Ricci flow systems---that is, Ricci flow coupled with other geometric flows---and prove dynamical stability of certain classes of stationary solutions of these flows.  The systems include Ricci flow coupled with harmonic map flow (studied abstractly and in the context of Ricci flow on warped products), Ricci flow coupled with both harmonic map flow and Yang-Mills flow, and Ricci flow coupled with heat flow for the torsion of a metric-compatible connection.  The methods used to prove stability follow a program outlined by Guenther, Isenberg, and Knopf, which uses maximal regularity theory for quasilinear parabolic systems and a result of Simonett.
\end{abstract}

\subjclass[2010]{53C25, 53C44}


\maketitle
    
\section{Introduction}

A fundamental problem in the study of differential equations is to determine the asymptotic behavior of solutions of a given equation.  This problem is central to the application of differential equations to Riemannian geometry.  For example, Eells and Sampson demonstrated the existence of harmonic maps by proving that solutions of the harmonic map flow converge \cite{EellsSampson1964}.  Hamilton placed strong restrictions on the topology of three-manifolds admitting positive Ricci curvature by proving that solutions of Ricci flow converge to space forms \cite{Hamilton1982} (see also \cites{Hamilton1986,BohmWilking2008}).

One way to phrase this problem is in terms of \textit{stability} of stationary solutions of the system of equations in question:~do solutions with initial data near a fixed point converge to that fixed point?  For Ricci flow, whose fixed points include Einstein and Ricci-flat metrics, there are many stability results.  

To mention a few of these results in the compact case, Ye proved that Einstein metrics with certain curvature pinching properties are stable \cite{Ye1993}; Guenther, Isenberg and Knopf proved that certain flat and Ricci-flat metrics are stable \cite{GuentherIsenbergKnopf2002}, and some of these results were improved by \v{S}e\v{s}um \cite{Sesum2006}; using the results of \v{S}e\v{s}um, Dai, Wang, and Wei proved that K\"{a}hler-Einstein metrics with non-positive scalar curvature are stable \cite{DaiWangWei2007}; Knopf and Young proved that hyperbolic space forms are stable \cite{KnopfYoung2009}; Wu proved that compact quotients of complex hyperbolic spaces are stable \cite{Wu2013}.  We should note that these authors use various techniques and obtain stability relative to various topologies on the space of metrics.

The purpose of this paper is to describe the stability of solutions of certain extended Ricci flow systems, which arise in various geometric contexts involving Riemannian manifolds with additional structure, and to show that techniques for proving stability of Ricci flow solutions (due to Guenther, Isenberg, and Knopf \cite{GuentherIsenbergKnopf2002}) apply to a wider range of flows.  First, we consider Ricci flow coupled with the harmonic map flow; see Section \ref{sec:hrf}.  If $\phi : (\mcl{B},g) \rightarrow (\mcl{N},\gamma)$ is a map of Riemannian manifolds, \textit{harmonic-Ricci flow} is the coupled system
\begin{equation}\label{eq:hrf}
\begin{aligned}
\partial_t g    &= -2 \Rc + 2c \, \nabla \phi \otimes \nabla \phi \\
\partial_t \phi &= \tau_{g,\gamma} \phi 
\end{aligned}
\end{equation}
where $\tau_{g,\gamma} \phi$ is the harmonic map Laplacian of $\phi$, $\nabla\phi \otimes \nabla\phi = \phi^* \gamma$, and $c$ is a (possibly time-dependent) coupling constant.  This was introduced by List in the case $\mcl{N} = \R$ \cite{List2008}, and the general case was addressed by M\"uller \cite{Muller2012}.  This flow appears naturally in certain situations; see Section \ref{subsec:hrf-setup} for examples.

We will demonstrate the stability of fixed points $(g,\phi)$ of a modified version of \eqref{eq:hrf}, where $\phi$ is constant and $g$ is a strictly linearly stable Einstein metric.  For this and the other flows in this paper, we assume linear stability of a metric with respect to the following linear operator on symmetric $2$-tensors on $(\mcl{B},g)$:  
\begin{equation}\label{eq:L0}
\mbf{L}_{0} h  := \Delta_{\ell} h + 2\lambda h 
\end{equation}
where $\Delta_\ell$ is the Lichnerowicz Laplacian and $\lambda < 0$ is specified (usually as the Einstein constant of a metric).  Linear stability means that there exists $\epsilon > 0$ such that, for all symmetric 2-tensors $h$ taken from some appropriate tensor space, 
\begin{equation}\label{eq:lin-stab}
(\mbf{L}_0 h,h) \leq -\epsilon \|h\|^2,
\end{equation}
relative to the $L^2$ inner product induced by $g$.  

\begin{thm}\label{thm:hrf-stab}
Let $\phi : (\mcl{B}^n,g) \rightarrow (\mcl{N},\gamma)$ be a map of Riemannian manifolds.  Suppose that $\mcl{M}$ is compact and orientable, $g$ is a strictly linearly stable Einstein metric, and $\phi$ is constant.  Then for any $\rho \in (0,1)$, there exists $\theta \in (\rho,1)$ such that the following holds.

There exists a $(1+\theta)$-little-H\"{o}lder neighborhood $\mcl{U}$ of $(g,\phi)$ such that for all initial data $\big(\widetilde{g}(0),\widetilde{\phi}(0)\big) \in \mcl{U}$, the unique solution $\big(\widetilde{g}(t),\widetilde{\phi}(t)\big)$ of curvature-normalized harmonic-Ricci-DeTurck flow (\ref{eq:cnhrdtf}) exists for all $t \geq 0$ and converges exponentially fast in the $(2+\rho)$-H\"{o}lder norm to $(g,\phi_{\infty})$, where $\phi_{\infty}$ is constant.
\end{thm}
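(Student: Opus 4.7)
The plan is to cast the curvature-normalized harmonic-Ricci flow (\ref{eq:hrf-trans}) as a quasilinear parabolic evolution equation on appropriate little-H\"{o}lder spaces, analyze its linearization at the fixed point $(g,\phi)$, and then invoke Simonett's center-manifold stability theorem, following the template of Guenther--Isenberg--Knopf.

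The first step is to break the diffeomorphism invariance of (\ref{eq:hrf-trans}) by a DeTurck-type modification of the $g$-equation, using the Levi-Civita connection of $g$ as the background; the coupling term $2c\,\nabla\widetilde{\phi}\otimes\nabla\widetilde{\phi}$ contains no top-order derivatives of $\widetilde{g}$ and does not disturb strict parabolicity. The resulting system on pairs $(\widetilde{g},\widetilde{\phi})$ fits Simonett's quasilinear framework on the interpolation scale of little-H\"{o}lder spaces, with $\theta\in(\rho,1)$ selected so that initial data in the $h^{1+\theta}$ topology propagate to solutions in $h^{2+\rho}$.

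Next I would linearize the modified flow at $(g,\phi)$. Since $\phi$ is constant, $d\phi\otimes d\phi=0$ and its variation in $\phi$ vanishes to first order, so the linearization $L$ block-diagonalizes. The $\phi$-block is the rough Laplacian $\Delta_g$ acting on sections of $\phi^*T\mcl{N}$; this is self-adjoint nonpositive with kernel equal to the parallel sections, which on connected $\mcl{M}$ are the constants and correspond to variations of $\phi$ through other constant maps $\mcl{M}\to\mcl{N}$. The $g$-block is the classical linearized normalized Ricci--DeTurck operator at the Einstein metric $g$, a Lichnerowicz-type operator shifted by a multiple of the Einstein constant. Showing this block has nonpositive spectrum and identifying its kernel is the main technical point: for $n\geq 3$ the strict negativity of sectional curvature forces the relevant operator on transverse traceless tensors to be strictly negative, and the gauge directions are killed by DeTurck, so the $g$-kernel is trivial; for $n=2$ the $g$-kernel is the tangent space to the Teichm\"uller space of constant negative curvature metrics.

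With the spectral picture in place, I would verify that every element of $\ker L$ is tangent to a smooth finite-dimensional family of genuine stationary solutions---the constant maps into $\mcl{N}$ on the $\phi$-side, and (for $n=2$) Teichm\"uller space on the $g$-side---so that Simonett's theorem applies and yields a $h^{1+\theta}$-neighborhood $\mcl{U}$ of $(g,\phi)$ in which every solution of the DeTurck-modified flow exists for all $t\geq 0$ and converges exponentially in $h^{2+\rho}$ to a point on the center manifold of stationary solutions. Finally, inverting the time-dependent DeTurck diffeomorphisms---whose convergence in $h^{2+\rho}$ is a standard consequence of the convergence of $\widetilde{g}$ and $\widetilde{\phi}$---returns a solution of (\ref{eq:hrf-trans}) itself, with the same exponential decay rate and the limiting behavior stated in the theorem.
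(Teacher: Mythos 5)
Your proposal follows essentially the same route as the paper: DeTurck modification, block-diagonal linearization with the Lichnerowicz operator $\Delta_\ell + 2\lambda$ on the metric block and the Laplacian on the $\phi^*T\mcl{N}$ block, verification of Simonett's hypotheses on the little-H\"older interpolation scale, and a center-manifold argument for the weakly stable directions (constant maps, and holomorphic quadratic differentials when $n=2$). The only substantive difference is in how the strict negativity of the metric block for $n\geq3$ is obtained: you propose a transverse-traceless/gauge decomposition (where the claim that gauge directions are ``killed by DeTurck'' needs care, since the linearized DeTurck operator acts nontrivially on Lie-derivative directions and their negativity must be checked separately), whereas the paper proves the estimate $(Lh,h)\leq K(n-2)\|h\|^2$ directly on all of $S^2\mcl{M}$ via Koiso's Bochner formula and a pointwise diagonalization of $h$, avoiding the decomposition entirely.
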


Second, we consider Ricci flow on warped products; see Section \ref{sec:wp}.  Warped products of the form $\R \times \mcl{F}^m$, where $\mcl{F}^m$ is a positively-curved Einstein manifold, were used by Simon to construct metrics with pinching singularities \cite{Simon2000}.  More recently, Lott and \v{S}e\v{s}um studied Ricci flow on compact warped products $\mcl{M}^3 = \mcl{B}^2 \times \mcl{S}^1$ and proved several stability-type results \cite{LottSesum2011}.  Tran also considered Ricci flow on warped products $\mcl{B}^n \times \mcl{F}^m$ where $\mcl{F}$ is Ricci-flat \cite{Tran2012}.  

We will study Ricci flow on multiply-warped products.  Specifically, fix an integer $m>0$ and let $(\mcl{B},g)$ and $(\mcl{F}_\alpha,g_\alpha)$ be Riemannian manifolds with $\dim \mcl{B} = n$ and $\dim \mcl{F}_\alpha = n_\alpha$, and let $\phi_\alpha \in C^\infty(\mcl{B})$, where $\alpha = 1,\dots,m$.  Define the following product manifold and multiply-warped product metric,
\[ \mcl{M} := \mcl{B} \times \prod_{\alpha=1}^m \mcl{F}_\alpha, \qquad
   \mbf{g} := g + \sum_{\alpha=1}^m e^{2\phi_\alpha} g_\alpha. \]
We will assume that each fiber factor is $\mu_\alpha$-Einstein.  On $(\mcl{M},\mbf{g})$, Ricci flow is the coupled system
\begin{equation}\label{eq:wprf}
\begin{aligned}
\partial_t g      &= -2 \Rc + 2 \sum_\alpha n_\alpha \, d\phi_\alpha \otimes d\phi_\alpha \\
\partial_t \phi_\alpha &= \Delta \phi_\alpha - \mu_\alpha e^{-2\phi_\alpha} \qquad \alpha=1,\dots,m
\end{aligned}
\end{equation}
which can be interpreted as a modified version of \eqref{eq:hrf} where the target is $\mbb{R}^m$.  Not surprisingly, the behavior of solutions depends strongly on the signs of $\mu_\alpha$.  When $\mu_\alpha \leq 0$, we obtain a result similar to Theorem \ref{thm:hrf-stab} above.

\begin{thm}\label{thm:wprf-stab}
Let $\mbf{g} = g +\sum e^{2\phi_\alpha} g_\alpha$ be a multiply-warped product metric on $\mcl{M} = \mcl{B} \times \prod \mcl{F}_\alpha$, where $\mathcal{B}^n$ is compact and orientable and $(\mcl{F}_\alpha^{n_\alpha},g_\alpha)$ is $\mu_\alpha$-Einstein.  Suppose that $g$ is a strictly linearly stable $\lambda$-Einstein metric, each $\phi_\alpha$ is constant, and each $\mu_\alpha$ equals either $\lambda$ or $0$.  Then for any $\rho\in(0,1)$, there exists $\theta\in(\rho,1)$ such that the following holds.

There exists a $(1+\theta)$-little-H\"{o}lder neighborhood $\mcl{U}$ of $\mbf{g}$ such that for all initial data $\widetilde{\mbf{g}}(0)\in \mcl{U}$, the unique solution $\widetilde{\mbf{g}}(t)$ of curvature-normalized multiply-warped product Ricci-DeTurck flow (\ref{eq:cnmwprdtf}) exists for all $t \geq 0$ and converges exponentially fast in the $(2+\rho)$-H\"{o}lder norm to $\mbf{g}_\infty := g + \sum e^{2(\phi_\alpha)_\infty} g_\alpha$, where $(\phi_\alpha)_\infty$ is constant.  In particular, if $\mu_\alpha = \lambda$, then $(\phi_\alpha)_\infty = \phi_\alpha$.
\end{thm}

Next, we consider \textit{locally $\R^N$-invariant Ricci flow}; see Section \ref{sec:irf}.  This flow was introduced by Lott as a means to prove that if $(\mcl{M}^3,g(t))$ is a compact, Type III Ricci flow solution with diameter $O(\sqrt{t})$, then the pull-back solution $(\widetilde{\mcl{M}},\widetilde{g}(t))$ on the universal cover converges to a homogeneous Ricci soliton \cite{Lott2010}.  For this, Lott considered a class of ``twisted'' principal $\R^N$-bundles.  Certain metrics on such bundles $\R^N \hookrightarrow \mcl{M}^{n+N} \rightarrow \mcl{B}^n$ can be represented locally as a triple $\mbf{g} = (g,A,G)$, where $g$ is a metric on the base, $A$ is an $\R^N$-valued $1$-form corresponding to a connection on $\mcl{M}$, and $G$ is an inner product on the fibers.  Ricci flow on these \textit{locally $\R^N$-invariant} metrics decomposes into a Ricci flow-type equation for $g$, a Yang-Mills flow-type equation for $A$, and a heat-type equation for $G$:
\begin{equation}\label{eq:irf}
\begin{aligned}
\partial_t g &= -2 \Rc + \frac{1}{2} \nabla G \otimes \nabla G + dA \otimes dA \\
\partial_t A &= -\delta dA + \langle \nabla G,dA \rangle \\
\partial_t G &= \tau_{g,\theta} G - \frac{1}{2} dA \otimes dA
\end{aligned}
\end{equation}
Notation will be explained fully below.  An important ingredient in the proof of Lott's theorem is a set of stability results for this system, proved by Knopf in the cases $N=1$ or $n=1$ \cite{Knopf2009}.  We extend some of those results to arbitrary dimensions.

\begin{thm}\label{thm:irf-stab}
Let $\mbf{g}=(g,A,G)$ be a locally $\R^N$-in\-var\-i\-ant metric of the form (\ref{eq:the-metric}) on a product $\R^N \times \mcl{B}^n$, where $\mcl{B}$ is compact and orientable.  Suppose that $g$ is a strictly linearly stable $\lambda$-Einstein metric, $A$ vanishes, and $G$ is constant.  Then for any $\rho \in (0,1)$, there exists $\theta \in (\rho,1)$ such that the following holds.

There exists a $(1+\theta)$-little-H\"{o}lder neighborhood $\mcl{U}$ of $\mbf{g}$ such that for all initial data $\widetilde{\mbf{g}}(0) \in \mcl{U}$, the unique solution $\widetilde{\mbf{g}}(t)$ of curvature-normalized locally $\R^N$-invariant Ricci-DeTurck flow (\ref{eq:cnirdtf}) exists for all $t \geq 0$ and converges exponentially fast in the $(2+\rho)$-H\"{o}lder norm to $\mbf{g}_{\infty} := (g,A_{\infty},G_{\infty})$, where $A_{\infty}$ vanishes and $G_{\infty}$ is constant.
\end{thm}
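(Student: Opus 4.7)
The argument fits the maximal-regularity/Simonett program already employed in the proofs of Theorems \ref{thm:hrf-stab} and \ref{thm:wprf-stab}; the only essential novelty is the three-field structure of (\ref{eq:irf}) and the corresponding linearized operator.

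\emph{Gauge fixing and normalization.} The system (\ref{eq:irf}) is degenerate parabolic owing to diffeomorphism invariance in $g$ and $\mbb{R}^N$-gauge (exact $1$-form) invariance in $A$. The first step is to impose a DeTurck background gauge for $g$ together with a Coulomb-type gauge $\delta A = 0$ for $A$, i.e.\ to augment the $A$-equation by a $d\delta A$ term. Rescaling by the Einstein constant of the background $g$ then produces the curvature-normalized, strictly parabolic system (\ref{eq:irf-trans}), for which $\mbf{g} = (g, 0, G)$ is a genuine stationary solution.

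\emph{Linearization.} At $(g,0,G)$ we have $dA = 0$ and $\nabla G = 0$, and inspection of (\ref{eq:irf}) shows that every cross-coupling between distinct fields is at least quadratic in $dA$ or $\nabla G$. Consequently the linearization $L$ of the gauge-fixed, normalized flow at $(g,0,G)$ decouples as
\begin{equation*}
L = L_g \oplus L_A \oplus L_G,
\end{equation*}
where $L_g$ is the normalized Lichnerowicz-type operator on symmetric $2$-tensors that appears in Theorem \ref{thm:hrf-stab}, $L_A$ is (up to a normalization shift) the Hodge Laplacian on $\mbb{R}^N$-valued $1$-forms on $\mcl{B}$, and $L_G$ is the rough Laplacian applied componentwise to $\mathrm{Sym}^2(\mbb{R}^N)^{\ast}$-valued functions on $\mcl{B}$.

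\emph{Spectral analysis and Simonett's theorem.} Each summand is self-adjoint and sectorial on the little-H\"older scale. For $L_g$ the analysis transfers verbatim from Theorem \ref{thm:hrf-stab}: non-positive spectrum, with kernel trivial when $n \geq 3$ and equal to the tangent space of the moduli of constant-curvature metrics when $n = 2$. For $L_G$ the kernel is the finite-dimensional space of constant symmetric matrices, parameterizing precisely the nearby fixed points in the $G$-direction. For $L_A$ the curvature-normalization shift must be shown to render the spectrum strictly negative, so that $\ker L_A$ is trivial and the conclusion $A_\infty = 0$ holds. Because all cross-coupling obstructions vanish quadratically at the fixed point, every element of $\ker L$ is tangent to a genuine stationary solution of the normalized nonlinear flow. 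Simonett's theorem then applies in the form used in Sections \ref{sec:hrf} and \ref{sec:wp}, yielding a $(1+\theta)$ little-H\"older neighborhood of stability with exponential $(2+\rho)$ H\"older convergence; inverting the DeTurck diffeomorphisms, the Coulomb gauge, and the time reparametrization gives the result.

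\emph{Main obstacle.} The chief technical point is the spectral analysis of $L_A$: one must verify that the normalization shift together with the negative curvature hypothesis suffices to push the Hodge spectrum strictly into the left half-plane (so that nonzero harmonic $1$-forms on $\mcl{B}$, if any, do not contribute zero modes to the normalized linearization). Once this is in place, the clean quadratic vanishing of the cross-couplings reduces the problem to three separate, well-understood pieces, and the maximal regularity machinery developed in the earlier sections applies essentially unchanged.
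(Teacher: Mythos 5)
Your proposal is correct and follows essentially the same route as the paper: curvature normalization, a DeTurck gauge for $g$ combined with the $\delta A$ gauge for $A$, a linearization that decouples into $\Delta_{\ell}+2\lambda$, the shifted Hodge Laplacian $\Delta_1+\lambda$ on $\R^N$-valued $1$-forms, and the componentwise Laplacian on variations of $G$, followed by Simonett's theorem. The one point you flag as the "main obstacle" is in fact immediate: since $g$ is Einstein with negative curvature one has $\lambda<0$, so $(\Delta_1 B+\lambda B,B)=-\|dB\|^2-\|\delta B\|^2+\lambda\|B\|^2<0$ for $B\neq 0$, and harmonic $1$-forms contribute no null modes.
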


Finally, we consider \textit{connection Ricci flow}; see Section \ref{sec:crf}.  This flow was introduced by Streets as a geometric interpretation of renormalization group flow on $(\mcl{B}^n,g)$ with $B$-field included, and which takes the form of Ricci flow coupled with heat flow for a closed three-form \cite{Streets2008} (see also \cites{Strominger1986,OliynykEtAl2006}).  Here, $n \geq 3$.  One can interpret the $B$-field strength as the torsion $\tau$ of a metric compatible connection, and Ricci flow in this setting becomes Ricci flow for $g$ coupled with heat flow for the torsion:
\begin{equation}\label{eq:crf}
\begin{aligned}
\partial_t g    &= -2 \Rc + \frac{1}{2} \mcl{H} \\
\partial_t \tau &= \Delta \tau
\end{aligned}
\end{equation}
where $\mcl{H}_{ij} = g^{pq} g^{rs} \tau_{ipr} \tau_{jqs}$.  There are also certain other assumptions on $\tau$ that we will make precise in Section \ref{subsec:crf-setup}.

We show that the flow is stable when the metric $g$ is Einstein and linearly stable, and the connection is the Levi-Civita connection of $g$.

\begin{thm}\label{thm:crf-stab}
Suppose that $(\mcl{B}^n,g)$ is compact and orientable, with $n \geq 3$, and that $g$ is a strictly linearly stable $\lambda$-Einstein metric.  Let $\nabla$ be a metric-compatible connection on $\mcl{B}$ with torsion $\tau=0$ (so $\nabla$ is the Levi-Civita connection of $g$).  Then for any $\rho \in (0,1)$, there exists $\theta \in (\rho,1)$ such that the following holds.

There exists a $(1+\theta)$-little-H\"{o}lder neighborhood $\mcl{U}$ of $(g,\tau)$ such that for all initial data $\big(\widetilde{g}(0),\widetilde{\tau}(0)\big) \in \mcl{U}$, the unique solution $\big(\widetilde{g}(t),\widetilde{\tau}(t)\big)$ of curvature-normalized connection Ricci-DeTurck flow (\ref{eq:cncrdtf}) exists for all $t \geq 0$ and converges exponentially fast in the $(2+\rho)$-H\"{o}lder norm to $(g,\tau)$.
\end{thm}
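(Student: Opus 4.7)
My plan is to follow the Guenther--Isenberg--Knopf stability program used to prove the earlier theorems of the paper. Starting from the curvature-normalized flow (\ref{eq:crf-trans}), I would apply a DeTurck-type gauge fix --- add $\mcl{L}_W g$ to the metric equation and $\mcl{L}_W \tau$ to the torsion equation, where $W = W(g,\bar g)$ is the usual DeTurck vector field relative to a fixed background metric --- obtaining a strictly parabolic quasilinear system on sections of the appropriate vector bundle over $\mcl{M}$ to which maximal regularity on the little-H\"older scale applies. The normalization ensures $(g,0)$ is a stationary solution, since $\Rc = \lambda g$ with $\lambda < 0$ and $\mcl{H}$ vanishes when $\tau = 0$.

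The crucial structural observation is that the linearization at $(g,0)$ is block-diagonal. Indeed, $\mcl{H}_{ij} = g^{pq}g^{rs}\tau_{ipr}\tau_{jqs}$ is quadratic in $\tau$ and so contributes no linear term to the $g$-equation; and since the metric dependence of $\Delta \tau$ enters through factors multiplying $\tau$ or its covariant derivatives, one has $(\delta_g \Delta_g)(\tau)|_{\tau=0}=0$, so the torsion equation decouples from the metric perturbation at linear order. The $h$-block is then the DeTurck-gauged Einstein stability operator whose spectral properties on compact negative Einstein manifolds are standard: it is self-adjoint, non-positive, and under the $n\geq 3$, negative sectional curvature, Einstein hypotheses the kernel of infinitesimal Einstein deformations modulo diffeomorphism is trivial, placing the $h$-spectrum in a strict half-plane $\{\mrm{Re}\,\zeta \leq -\delta\}$. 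The $\sigma$-block is a heat-type operator on $3$-tensors carrying the symmetries imposed in Section \ref{subsec:crf-setup}; here I would use a Weitzenb\"ock identity together with the negative sectional curvature hypothesis to obtain strict negativity of the spectrum, and the algebraic and topological constraints on $\tau$ built into the set-up to eliminate the zero eigenspace.

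With sectoriality and spectrum confined to a strict left half-plane, Simonett's theorem applied on the little-H\"older scale $h^{k+\alpha}$ produces, for each $\rho\in(0,1)$, a $\theta\in(\rho,1)$ and a $(1+\theta)$ little-H\"older neighborhood $\mcl{U}$ of $(g,0)$ from which the DeTurck-modified solution exists for all time and converges exponentially in the $(2+\rho)$-norm to $(g,0)$. Pulling back by the DeTurck diffeomorphism preserves the convergence rate and gives the analogous statement for (\ref{eq:crf-trans}).

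The principal obstacle I anticipate is the Weitzenb\"ock analysis of the $\sigma$-block: one must produce an identity for the Laplacian of (\ref{eq:crf}) that is compatible with the algebraic symmetries of $\tau$, and then leverage the sectional curvature bound --- in conjunction with whatever cohomological or structural restriction is recorded in Section \ref{subsec:crf-setup} --- to force the zero-th order term to be strictly positive, ruling out a kernel. Once that spectral gap is established, the surrounding apparatus --- quasilinear parabolic theory in little-H\"older spaces, DeTurck unwinding, and interpolation --- is routine within the framework already deployed for Theorems \ref{thm:hrf-stab}, \ref{thm:wprf-stab}, and \ref{thm:irf-stab}.
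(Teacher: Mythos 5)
Your overall architecture (DeTurck gauge-fixing, block-diagonal linearization, maximal regularity on the little-H\"older scale, Simonett) matches the paper, and your observation that $\mcl{H}$ is quadratic in the torsion so the linearization decouples is correct, as is the treatment of the $h$-block via the negative-Einstein spectral estimate. But the step you yourself flag as the principal obstacle --- obtaining a strict spectral gap for the torsion block via a Weitzenb\"ock identity, negative sectional curvature, and ``cohomological or structural restrictions'' --- is where the proposal goes wrong, and the difficulty you anticipate cannot be resolved the way you suggest. The paper works with the closed $3$-form $H$ dual to $\tau$, and the unnormalized linearized torsion operator is the Hodge--de Rham Laplacian $\Delta_d=-(d\delta+\delta d)$ on closed $3$-forms. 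Its kernel is the space of harmonic $3$-forms, i.e.\ $H^3_{\mrm{dR}}(\mcl{M})$, which is nonzero in general (for instance it is one-dimensional for every closed orientable $3$-manifold, including the hyperbolic ones covered by the theorem). No curvature hypothesis can remove a topologically determined kernel, and the set-up in Section \ref{subsec:crf-setup} imposes no constraint that kills it; a Weitzenb\"ock argument therefore yields at best weak stability, which would force a center-manifold analysis you have not provided.

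The paper's resolution is much simpler and is already built into the statement you are proving: the curvature normalization producing \eqref{eq:crf-trans} inserts the zeroth-order term $-sH=2\lambda H$ into the torsion equation, so the linearized DeTurck operator is $\mbf{L}_1=\Delta_d+2\lambda\,\id$ as in \eqref{eq:crf-L1}. Since $\lambda<0$, integration by parts gives
\[
(\mbf{L}_1\eta,\eta)=-\|d\eta\|^2-\|\delta\eta\|^2+2\lambda\|\eta\|^2\le 2\lambda\|\eta\|^2<0
\]
for $\eta\neq 0$, which is the strict stability recorded in Lemma \ref{lem:crf-stable}; no Weitzenb\"ock formula, curvature positivity, or cohomological vanishing is needed, and both blocks being strictly stable means Simonett's theorem applies with no center manifold at all. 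To repair your argument, replace the proposed Weitzenb\"ock analysis of the torsion block with the observation that the normalization term alone supplies the spectral gap.
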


The proofs of these theorems follow the same general outline.  

\begin{enumerate}
\item Modify the flow so that the fixed points are more easily studied.  This involves rescaling and pulling back by diffeomorphisms (including a DeTurck trick), and the fixed points include Einstein metrics together with other objects relevant to the flow in question (e.g., maps or connections).   
\item Compute the linearization of the modified flow, and discuss linear stability at the fixed points.  We assume metrics are strictly linearly stable, but other objects might only be weakly linearly stable.  In that case we must understand the null eigenspaces.
\item Obtain dynamical stability by setting up the appropriate little-H\"older spaces and applying a theorem of Simonett.  (See Appendix \ref{app:stab} for the statement of the theorem.)
\end{enumerate}
This technique was introduced by Guenther, Isenberg, and Knopf \cite{GuentherIsenbergKnopf2002}, and has subsequently been used to prove several other results.  For example, as mentioned before, Knopf proves stability of certain solutions of locally invariant $\R^N$-invariant Ricci flow \cites{Knopf2009}.  Young considers Ricci flow coupled with Yang-Mills flow and proves stability of certain solutions \cite{Young2010}.  Wu, as cited above, uses these methods to prove that quotients of complex hyperbolic space are stable under Ricci flow \cite{Wu2013}.  The method has been adapted for use in the non-compact setting by Wu and the author \cites{Wu2013,WilliamsWu2013-dynamical}.

Step (3) in this technique relies on the maximal regularity theory of Da Prato and Grisvard \cite{DaPratoGrisvard1979}, which exploits the smoothing properties of quasilinear parabolic operators.  The actual dynamical stability then follows from a (quite general) theorem of Simonett, which is based on this maximal regularity theory \cite{Simonett1995}.  

\begin{rem}
Simonett's theorem has the feature of giving dynamical stability even in the presence of center manifolds, which occur when a fixed point is only weakly linearly stable, that is, when there is a non-trivial null eigenspace of the linearized flow operator.  The first analysis of center manifolds in problems relating to Ricci flow appeared in \cite{GuentherIsenbergKnopf2002}.  To allow for simplified statements of our theorems, we assume that the metrics are strictly linearly stable, and only consider weak stability the other objects (maps/connections, etc.).  The theorems could be adapted to account for center manifolds arising from weakly stable metrics, but this would require more knowledge of the null eigenspaces, and so the theorems would be more complicated to state.
\end{rem}

\begin{rem} 
The first three theorems here are true when the Einstein manifold $(\mcl{B},g)$ is replaced by a two-dimensional sphere with constant positive sectional curvature.  Unfortunately, the techniques used here do not generalize to higher dimensions for positively curved manifolds.  See \cite{Knopf2009}*{Remark 2}.
\end{rem}

\begin{ack}
The author wishes to thank Dan Knopf for his helpful comments and suggestions, and Peter Petersen for many enlightening discussions.
\end{ack}

\section{Harmonic-Ricci flow}\label{sec:hrf}
  
\subsection{Setup and examples}\label{subsec:hrf-setup}

Let us provide background for the coupled flow \eqref{eq:hrf}.  Let $(\mcl{B},g)$ be a closed Riemannian manifold, with $(\mcl{N},\gamma)$ a closed target manifold.  Let $\phi : \mcl{B} \rightarrow \mcl{N}$ be a smooth map.  The Levi-Civita covariant derivative $\nabla^{T\mcl{N}}$ of the metric $\gamma$ on $N$ induces a covariant derivative $\nabla^{\phi^* T\mcl{N}}$ on the pull-back bundle $\phi^* T\mcl{N} \rightarrow \mcl{M}$, given by
\[ \nabla^{\phi^* T\mcl{N}}_X Y = \nabla^{T\mcl{N}}_{\phi_* X} Y, \]
for $X \in C^\infty(T \mcl{M})$ and $Y \in C^\infty(T \mcl{N})$.  This and the Levi-Civita covariant derivative $\nabla^{T\mcl{B}}$ of the metric $g$ on $\mcl{B}$ induce covariant derivatives on all tensor bundles over $\mcl{B}$ of the form
\[ (T^*\mcl{B})^{\otimes p} \otimes (T\mcl{B})^{\otimes q} \otimes (\phi^* T^*\mcl{N})^{\otimes r} \otimes (\phi^* T\mcl{N})^{\otimes s}. \]
We refer to them simply as $\nabla$.  Related quantities are decorated with the metric name, if necessary, e.g., ${}^g \nabla$.  In local coordinates $(x^i)$ on $\mcl{B}$ and $(y^\alpha)$ on $\mcl{N}$,
\[ \nabla \phi = \phi_* = \partial_i \phi^{\alpha} \, dx^i \otimes \partial_{\alpha}|_{\phi} \in C^\infty(T^*\mcl{B} \otimes \phi^* T\mcl{N}). \]
Similarly, we have
\begin{align*}
\nabla^2 \phi 
&= \big( \partial_i \partial_j \phi^\alpha - {}^g \Gamma_{ij}^k \partial_k \phi^\alpha + ({}^\gamma \Gamma \circ \phi)_{\mu \nu}^\alpha \partial_i \phi^\mu \partial_j \phi^\nu \big) \, dx^i \otimes dx^j \otimes \partial_\alpha|_\phi \\
&\in C^\infty(T^*\mcl{B} \otimes T^*\mcl{B} \otimes \phi^* T\mcl{N}). 
\end{align*}
The \textit{harmonic map Laplacian} (or \textit{tension field}) of $\phi$ with respect to $g$ and $\gamma$ is
\begin{equation}\label{eq:tension}
\begin{aligned}
\tau_{g,\gamma} \phi 
&:= \tr_g \nabla^2 \phi \\
&= g^{ij} \big( \partial_i \partial_j \phi^\alpha - {}^g \Gamma_{ij}^k \partial_k \phi^\alpha + ({}^\gamma \Gamma \circ \phi)_{\mu \nu}^\alpha \partial_i \phi^\mu \partial_j \phi^\nu \big) \, \partial_\alpha|_\phi \\
&\in C^{\infty}(\phi^* T\mcl{N}).
\end{aligned}
\end{equation}
Additionally, the induced metric $\phi^* \gamma$ on $\phi^* T\mcl{N}$ can be expressed as
\begin{equation}\label{eq:phi-quad}
\nabla \phi \otimes \nabla \phi = \gamma_{\alpha \mu} \partial_i \phi^\alpha \partial_j \phi^\mu \, dx^i \otimes dx^j = \phi^* \gamma,
\end{equation}
which is a symmetric $2$-tensor on $\mcl{B}$.

Now we recall the flow \eqref{eq:hrf}: if $\phi : (\mcl{B},g) \rightarrow (\mcl{N},\gamma)$ is a map of Riemannian manifolds, the \textit{harmonic-Ricci flow} is the coupled system
\begin{align*}
\partial_t g    &= -2 \Rc + 2 c \, \nabla \phi \otimes \nabla \phi \\
\partial_t \phi &= \tau_{g,\gamma} \phi 
\end{align*}
where $c=c(t) \geq 0$ is a coupling function.  This flow is also sometimes called the $(RH)_\alpha$ flow (when $c = \alpha)$.  We will assume that $c(t)$ is non-increasing.  As mentioned above, this flow was introduced in \cite{Muller2012} and is a generalization of one studied in \cite{List2008}.  

Here are some examples of the flow.  In studying expanding Ricci solitons on homogeneous spaces, Lott considered as a model a special type of vector bundle \cite{Lott2007}.  Let $\mcl{M}$ be an $\R^N$-vector bundle with flat connection, flat metric $G$ on the fibers, and Riemannian base $(\mcl{B}^n,g)$.  Assume that the connection preserves fiberwise volume forms.  Lott showed that the soliton equation becomes a pair of equations.  One is a soliton-like equation for $g$.  The other is an equation for $G$, which says that $G$ is harmonic when interpreted as a map $G : \mcl{B} \rightarrow \big(\SL(N,\R)/\SO(N),\theta \big)$.  Here, $\theta$ is the natural metric induced by $G$; see \eqref{eq:S-metric}.

In fact, more is true.  Ricci flow on such bundles is the coupled flow
\begin{align*}
\partial_t g &= -2 \Rc + \frac{1}{4} \nabla G \otimes \nabla G \\
\partial_t G &=  \tau_{g,\theta} G
\end{align*}
which is the harmonic-Ricci flow on $(\mcl{B},g)$ with map $G$ and $c=1/8$; see \cite{Williams2013-hrf}.  This is a special case of the coupled system considered in Section \ref{sec:irf}.  All 3D and 4D homogeneous spaces admitting expanding Ricci solitons have this bundle structure, so the corresponding Ricci flow solutions are harmonic-Ricci flow solutions.

Ricci flow on a warped product $(\mcl{B}^n \times \mcl{S}^1,g + e^{2u} d\theta)$, after modification by diffeomorphisms, has the form:
\[ \begin{aligned}    
\partial_t g &= -2 \Rc + 2 \, du \otimes du \\
\partial_t u &= \Delta u
\end{aligned} \]
This is harmonic-Ricci flow with target $\R$ and $c=1$, and was studied by Lott and \v{S}e\v{s}um \cite{LottSesum2011}.  It is also a special case of the system considered in Section \ref{sec:wp}.

\subsection{Stability}  

In this section we follow the outline given in the introduction to prove Theorem \ref{thm:hrf-stab}.

We transform the system into one whose fixed points include pairs $(g_0,\phi_0)$ with $g_0$ Einstein and $\phi_0$ constant.  Suppose that $\big(\overline{g}(\overline{t}),\overline{\phi}(\overline{t})\big)$ is a solution of \eqref{eq:hrf} for $\overline{t} \geq 0$.  For $\lambda < 0$, let $\sigma(\overline{t}) := 1-2\lambda \overline{t}$ and consider
\[ \big( \overline{g}(\overline{t}),\overline{\phi}(\overline{t}) \big) \longmapsto \big( g(t), \phi(t) \big), \]
where
\[ 
t := -\frac{1}{2\lambda} \log \sigma(\overline{t}), \quad
g := \sigma^{-1} \overline{g}, \quad 
\phi := \overline{\phi}.
 \]
A straightforward calculation in the manner of \cite{ChowKnopf2004}*{Section 9.1} shows that this transformation results in the modified flow
\begin{equation}\label{eq:cnhrf}
\begin{aligned}
\partial_t g    &= -2 \Rc + 2 c \, \nabla \phi \otimes \nabla \phi + 2\lambda g \\
\partial_t \phi &= \tau_{g,\gamma} \phi
\end{aligned} 
\end{equation}
Call this system the \textit{curvature-normalized harmonic-Ricci flow}.

We next use the DeTurck trick to make the system \eqref{eq:cnhrf} strictly parabolic.  That is, we pull back by diffeomorphisms generated by carefully chosen vector fields, which has the effect of subtracting a Lie derivative term from both equations in \eqref{eq:cnhrf}.  To this end, fix a background metric $g_0$ (which we can take to be a fixed-point metric; see below) and define a vector field $W$ depending on $g_0$ and $g(t)$ by
\[ W^k := g^{ij}(\Gamma_{ij}^k - {}^{g_0}\Gamma_{ij}^k) \]
for $k = 1,\dots,n$.  Let $F_{t}$ be diffeomorphisms generated by $W(t)$, with initial condition $F_0 = \id$.  The one-parameter family $\big(F_t^* g(t),F_t^* \phi(t)\big)$ is the solution of  
\begin{equation}\label{eq:cnhrdtf}
\begin{aligned}
\partial_t g    &= -2 \Rc + 2c \, \nabla \phi \otimes \nabla \phi +2\lambda g - \mcl{L}_W g,  \\
\partial_t \phi &= \tau_{g,h} \phi - \mcl{L}_W \phi,  
\end{aligned} 
\end{equation}
Call this system the \textit{curvature-normalized harmonic-Ricci-DeTurck flow}.

A stationary solution of \eqref{eq:cnhrf} is also a stationary solution of \eqref{eq:cnhrdtf}, and we can describe a large class of such fixed points.

\begin{lem}\label{lem:hrf-fixed-points}
Suppose that $(\mcl{B}^n,g_0)$ is closed and $\lambda$-Einstein, and let $\phi_0 : (\mcl{B},g) \rightarrow (\mcl{N},\gamma)$ be a constant map.  The pair $(g_0,\phi_0)$ is a stationary solution of the flows \eqref{eq:cnhrf} and \eqref{eq:cnhrdtf}.
\end{lem}

To analyze the stability of such a fixed point $(g_0,\phi_0)$ of this flow, we must compute the linearization.  Let
$\big( \widetilde{g}(\epsilon),\widetilde{\phi}(\epsilon)\big)$ be a variation of $(g_0,\phi_0)$ such that
\begin{equation}\label{eq:hrf-var}
\begin{aligned}
\widetilde{g}(0) &= g_0, 
                 &\partial_\epsilon \big|_{\epsilon=0} \widetilde{g}(\epsilon) 
								 &= h \in \mcl{S}^2 \mcl{M}, \\
\widetilde{\phi}(0) &= \phi_0, 
                    &\partial_\epsilon \big|_{\epsilon=0} \widetilde{\phi}(\epsilon) 
										&= \psi \in C^\infty(\phi_0^* T\mcl{N}).
\end{aligned}
\end{equation}
More explicitly, $\widetilde{\phi}(x,\epsilon) = \exp_{\phi_0} (\epsilon \psi(x))$.  Let $\Delta_{\ell}$ denote the Lichnerowicz Laplacian acting on symmetric
$(2,0)$-tensor fields.  Its components are
\[ \Delta_{\ell}h_{ij}=\Delta h_{ij}+2R_{ipqj}h^{pq}-R_{i}^{k}h_{kj}-R_{j}^{k}h_{ik}. \]

\begin{lem}\label{lem:hrf-deturck-lin}
The linearization of the curvature-normalized harmonic-Ricci DeTurck flow at a fixed point $(g_0,\phi_0)$, where $g_0$ is $\lambda$-Einstein and $\phi_0$ is constant, is the autonomous, self-adjoint, strictly parabolic system
\begin{subequations}
\begin{align}
\partial_t    h &= \mbf{L}_{0} h    := \Delta_{\ell} h + 2\lambda h \label{eq:hrf-L0} \\
\partial_t \psi &= \mbf{L}_{1} \psi := \Delta \psi \label{eq:hrf-L1}
\end{align}
\end{subequations}
where $\mbf{L}_1 = \Delta$ satisfies $(\Delta \psi)^\alpha = \Delta (\psi^\alpha)$ in local coordinates.
\end{lem}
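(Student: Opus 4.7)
The plan is to combine Lemma \ref{lem:hrf-first-lin} with the first-order variation of the Lie-derivative correction $\mcl{L}_W g$ coming from the DeTurck vector field \eqref{eq:hrf-DeTurck1}, and to observe that the non-Laplacian terms in \eqref{eq:hrf-lin-nonpar-g} are cancelled exactly by this correction. For the $\phi$-equation, the DeTurck correction should vanish identically at first order because both $W$ and $d\phi_0$ vanish at the fixed point.

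For the metric equation, since $\Gamma_{ij}^k = {}^{g_0}\Gamma_{ij}^k$ and hence $W = 0$ at $g = g_0$, the linearization of $W$ reduces to the contribution from $\dot\Gamma$:
\[ \dot W^k = g_0^{ij}\dot\Gamma_{ij}^k = \tfrac{1}{2}g_0^{ij}g_0^{k\ell}\bigl(\nabla_i h_{j\ell} + \nabla_j h_{i\ell} - \nabla_\ell h_{ij}\bigr). \]
Lowering the index and using $(\delta h)_\ell = -\nabla^i h_{i\ell}$, this becomes $\dot W_\ell = -(\delta h)_\ell - \tfrac{1}{2}\nabla_\ell \tr_{g_0}h$, so
\[ (\mcl{L}_{\dot W} g_0)_{ij} = \nabla_i \dot W_j + \nabla_j \dot W_i = -\bigl[\nabla_i(\delta h)_j + \nabla_j(\delta h)_i + \nabla_i\nabla_j \tr_{g_0} h\bigr], \]
which is minus the non-Laplacian part of \eqref{eq:hrf-lin-nonpar-g}. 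Taking into account the sign with which the Lie derivative enters the pulled-back flow, these contributions cancel and the linearized equation collapses to $\partial_t h = \Delta_\ell h + 2\lambda h$, i.e.\ \eqref{eq:hrf-L0}.

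For the $\phi$-equation, the pull-back under $F_t$ adds a term $(d\phi)(W)$ to $\partial_t\phi$, whose linearization at $(g_0,\phi_0)$ is $d\psi(W_0) + d\phi_0(\dot W) = 0$ since $W_0 = 0$ and $d\phi_0 = 0$. Combined with \eqref{eq:hrf-lin-nonpar-phi}, this yields $\partial_t \psi = \Delta\psi$, i.e.\ \eqref{eq:hrf-L1}; here the $\phi^*T\mcl{N}$-connection degenerates to coordinatewise differentiation in a local trivialization because $d\phi_0 = 0$ kills the $\gamma$-Christoffel contributions in $\tau_{g,\gamma}$. Finally, the system is autonomous because all coefficients depend only on the fixed $(g_0,\phi_0)$; each component is formally self-adjoint (the Lichnerowicz Laplacian is self-adjoint on symmetric $(2,0)$-tensors, and $\Delta$ is self-adjoint on sections of $\phi_0^*T\mcl{N}$); and strict parabolicity is immediate from the principal symbol of each operator being $|\xi|^2\,\mrm{id}$. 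The main obstacle I anticipate is careful bookkeeping of sign conventions in the DeTurck calculation and in the pull-back formulas; conceptually nothing is subtle, but errors here would propagate into the subsequent spectral analysis underlying the stability argument.
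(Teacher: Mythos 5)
Your proposal is correct and follows essentially the same route as the paper: linearize the Lie-derivative correction using $W(g_0)=0$, observe that it exactly cancels the gradient terms in \eqref{eq:hrf-lin-nonpar-g}, and note that the correction to the $\phi$-equation vanishes to first order because $W_0=0$ and $d\phi_0=0$ (your explicit product-rule split $d\psi(W_0)+d\phi_0(\dot W)$ is in fact slightly more careful than the paper's one-line justification). The only caveat is a sign-convention mismatch: the paper subtracts $\mcl{L}_W g$ and quotes its variation as $+\bigl[\nabla_i(\delta h)_j+\nabla_j(\delta h)_i+\nabla_i\nabla_j\tr_{g_0}h\bigr]$, whereas your computation with $(\delta h)_\ell=-\nabla^i h_{i\ell}$ yields the opposite sign and hence requires the Lie derivative to enter with a plus sign — either internally consistent bookkeeping produces the same cancellation, as you correctly anticipate.
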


\begin{proof}
With a variation as in \eqref{eq:hrf-var}, we must compute
\[ \partial_\epsilon \big|_{\epsilon = 0} \Big( \partial_t \widetilde{g}(\epsilon) \Big),
\qquad \partial_\epsilon \big|_{\epsilon = 0} \Big( \partial_t \widetilde{\phi}(\epsilon) \Big). \]
Such computations involve standard variational formulas for geometric objects like $g^{-1}$, $\Gamma$, $\Rc$, and $R$.  See \cite{ChowKnopf2004}*{Section 3.1}, for example.  The first equation is similar to one considered in \cite{Knopf2009}*{Lemmas 3 and 4}; in particular, the Lie derivative from the DeTurck trick precisely cancels the unpleasant terms from the linearization of $\Rc$.  In local coordinates, we use \eqref{eq:phi-quad} to see that the linearization of the term involving $\phi$ vanishes, since
 \begin{align*}
\partial_\epsilon \big|_{\epsilon = 0} \Big( \nabla \widetilde{\phi}(\epsilon) \otimes \nabla \widetilde{\phi}(\epsilon) \Big)_{ij} 
&= \partial_\epsilon \big|_{\epsilon = 0} \Big(
\gamma_{\alpha\beta} \partial_i \widetilde{\phi}(\epsilon)^\alpha \partial_j \widetilde{\phi}(\epsilon)^\beta \Big) \\
&= \gamma_{\alpha\beta} \Big( \partial_i \psi^\alpha \partial_j \phi_0^\beta + \partial_i \phi_0^\alpha \partial_j \psi^\beta \Big) = 0.
\end{align*}
For the second equation, we use the coordinate expression for the tension field from \eqref{eq:tension} to see that
\begin{align*}
\partial_\epsilon \big|_{\epsilon = 0} \Big( \partial_t \widetilde{\phi}(\epsilon) \Big)^\alpha
&= \partial_\epsilon \big|_{\epsilon = 0} \tau_{\widetilde{g}(\epsilon),\gamma}
\widetilde{\phi}(\epsilon)^\alpha \\
&= g^{ij} ( \partial_i \partial_j \psi^\alpha - {}^g \Gamma_{ij}^k \partial_k \psi^\alpha)
= \Delta \psi^\alpha. 
\end{align*}
Also,
\[ \mcl{L}_W \phi = \nabla \phi(W) = \partial_i \phi^\alpha W_i \, \partial_\alpha |_\phi, \]
and since $W(0)=0$, we have
\[ \partial_\epsilon \big|_{\epsilon=0} \partial_i \widetilde{\phi}^\alpha \widetilde{W}_i = 0, \]
giving \eqref{eq:hrf-L1}.
\end{proof}


We now wish to consider linear stability of a fixed point $(g_0,\phi_0)$ of the curvature-normalized harmonic-Ricci-DeTurck flow with respect to the system (\ref{eq:hrf-L0}-\ref{eq:hrf-L1}).  We will assume that $\phi_0$ is constant and $g_0$ is a $\lambda$-Einstein metric that is linearly stable with respect to \eqref{eq:L0}.  First, however, let us consider an example of when this last property is satisfied.

\begin{lem}\label{lem:linear-stab}

Suppose that $g$ is $\lambda$-Einstein, and that there exists $K < 0$ such that $\sec \leq K$.  Then for $\mbf{L}_0 h = \Delta_\ell h + 2\lambda h$, we have $(\mbf{L}_0 h,h) \leq K(n-2) \|h\|^2 < 0$.
\end{lem}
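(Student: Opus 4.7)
The plan is to reduce the claim to a pointwise algebraic bound in an orthonormal frame diagonalizing $h$, using (i) the Einstein condition to eliminate Ricci terms, (ii) integration by parts, and (iii) the sectional curvature bound.

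First, since $\Rc(g) = \lambda g$, the two Ricci terms in $\Delta_\ell$ combine to $-2\lambda h$, exactly cancelling the $+2\lambda h$ added in the definition of $L$:
\[
L h_{ij} \;=\; \Delta h_{ij} + 2\, R_{ipqj}\, h^{pq}.
\]
Pairing with $h$ and integrating by parts on the rough Laplacian gives
\[
(Lh, h) \;=\; -\|\nabla h\|_{L^2}^2 \;+\; 2 \int_{\mcl{M}} R_{ipqj}\, h^{pq} h^{ij}\, dV,
\]
so the non-positive gradient term can be discarded and it suffices to bound the curvature integrand pointwise.

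For the pointwise bound, I fix a point $x \in \mcl{M}$ and choose an orthonormal frame $\{e_i\}$ diagonalizing $h(x)$, with $h_{ij} = \mu_i \delta_{ij}$. In this frame $R_{ipqj}\, h^{pq} h^{ij}|_x = \sum_{i \neq p} R_{ippi}\, \mu_i \mu_p$, and $R_{ippi}$ is, up to the convention sign, the sectional curvature $K_{ip}$ of the plane $\operatorname{span}\{e_i,e_p\}$. Using the polarization identity $2\mu_i\mu_p = \mu_i^2 + \mu_p^2 - (\mu_i-\mu_p)^2$ together with the Einstein trace identity $\sum_{p \neq i} K_{ip} = \lambda$ (which holds in any orthonormal frame, since $\Rc = \lambda g$), I expect the Petersen-type algebraic identity
\[
R_{ipqj}\, h^{pq} h^{ij}\big|_x \;=\; -\lambda\, |h|^2 \;+\; \tfrac{1}{2} \sum_{i \neq p} K_{ip}\, (\mu_i - \mu_p)^2
\]
to emerge, with the sign of the second term dictated by the convention used for $R$.

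The sectional curvature hypothesis $K_{ip} \leq K < 0$ then yields
\[
\sum_{i \neq p} K_{ip}(\mu_i - \mu_p)^2 \;\leq\; K \sum_{i \neq p} (\mu_i - \mu_p)^2 \;=\; 2K\bigl(n|h|^2 - (\tr_{g_0} h)^2\bigr),
\]
where $(\mu_i-\mu_p)^2 \geq 0$ is what makes the direction of the inequality preserved. Averaging $K_{ip} \leq K$ over $p \neq i$ in the trace identity yields the complementary bound $\lambda \leq (n-1)K$, and Cauchy--Schwarz gives $(\tr h)^2 \leq n|h|^2$. The terms involving $\lambda$, $nK$, $(n-1)K$ and the $(\tr h)^2$ contribution should then combine to leave exactly
\[
(Lh, h) \;\leq\; K(n-2)\,\|h\|^2,
\]
with strict negativity since $K < 0$ and $n \geq 3$ (the case $n=2$ being handled separately in Theorem \ref{thm:hrf-stab}).

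The main obstacle is the clean matching of constants in the final combination: the several pieces involving $\lambda$, $nK$, $(n-1)K$, and $(\tr h)^2$ have to align to produce precisely the coefficient $(n-2)$ rather than a weaker constant. This is where Petersen's suggestion is essential, since the polarized identity above isolates the differences $(\mu_i-\mu_p)^2$ against which the one-sided sectional curvature bound can be applied uniformly; a secondary concern is tracking sign conventions (for both the Lichnerowicz Laplacian and the curvature tensor) so as not to accidentally reverse a key inequality along the way.
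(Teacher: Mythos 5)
Your skeleton --- use the Einstein condition to cancel the Ricci terms against $+2\lambda h$, integrate the rough Laplacian by parts, diagonalize $h$ in an orthonormal eigenframe, and polarize the cross terms $\sec(e_i,e_p)\mu_i\mu_p$ so that the one-sided curvature bound can be applied --- is the same as the paper's, but the polarization you choose is the wrong one, and this is a genuine gap rather than a sign-convention nuisance. With the convention under which $R_{ipqj}h^{pq}h^{ij} = \sum_{i\neq p}\sec(e_i,e_p)\,\mu_i\mu_p$ (the convention that makes sectional curvatures appear at all), the identity produced by $2\mu_i\mu_p = \mu_i^2+\mu_p^2-(\mu_i-\mu_p)^2$ together with $\sum_{p\neq i}\sec(e_i,e_p)=\lambda$ is
\[
\sum_{i\neq p}\sec(e_i,e_p)\,\mu_i\mu_p \;=\; \lambda\,|h|^2 \;-\; \tfrac{1}{2}\sum_{i\neq p}\sec(e_i,e_p)\,(\mu_i-\mu_p)^2 ,
\]
with signs opposite to the ones you wrote. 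The second term is a sum of nonnegative quantities $(-\sec)(\mu_i-\mu_p)^2$, and the hypothesis $\sec\leq K$ bounds it from \emph{below}; to bound it from above you would need a lower bound on the sectional curvature, which is not assumed. Even if one takes your displayed identity at face value, the endgame does not close: the resulting terms $-\lambda|h|^2$, $Kn|h|^2$, and $-K(\tr h)^2$ cannot be combined into $K(n-2)\|h\|^2$, because $-K(\tr h)^2\geq 0$ cannot be discarded and $-\lambda|h|^2>0$ again requires an upper bound on $-\lambda$, i.e.\ a lower curvature bound.

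The paper's argument (the ``Petersen trick'') polarizes with \emph{sums} instead: the Einstein identity is used to absorb $\lambda\sum_i\mu_i^2=\sum_{i\neq j}\sec(e_i,e_j)\mu_i^2$ \emph{into} the curvature-weighted sum rather than to extract $\lambda|h|^2$ out of it, giving
\[
\sum_{i\neq j}\sec(e_i,e_j)\,\mu_i\mu_j \;+\; \lambda|h|^2 \;=\; \tfrac{1}{2}\sum_{i\neq j}\sec(e_i,e_j)\,(\mu_i+\mu_j)^2 ,
\]
each summand of which is $\leq K(\mu_i+\mu_j)^2$ because $(\mu_i+\mu_j)^2\geq 0$. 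Since $\sum_{i\neq j}(\mu_i+\mu_j)^2 = 2(n-2)|h|^2 + 2(\tr h)^2$ and the leftover $K(\tr h)^2\leq 0$ can be discarded, this is precisely where the coefficient $(n-2)$ comes from. So the repair is local but essential: replace $(\mu_i-\mu_p)^2$ by $(\mu_i+\mu_p)^2$ and keep the Einstein term inside the curvature-weighted square. The treatment of the derivative term in your proposal is fine --- the paper routes it through Koiso's Bochner formula, but simply discarding $-\|\nabla h\|^2\leq 0$ suffices for the stated inequality.
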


\begin{proof}
First, write a symmetric $2$-tensor $h$ as $h = g(\widetilde{h} \cdot,\cdot)$ and let $\{e_i\}$ be a local orthonormal basis of eigenvectors for $\widetilde{h}$.  That is, $\widetilde{h}(e_i) = \mu_i e_i$.  Now, we write part of $\langle \mbf{L}_0 h,h \rangle$ in components with respect this basis.  Using that $g$ is $\lambda$-Einstein, we have
\begin{align*}
&\sum_{i,j,k,\ell} R_{ijk\ell} h^{i\ell} h^{jk} - \sum_{i,j,k} R_i^j h_j^k h_k^i + 2\lambda \sum_{i,j} h_{ij} h^{ij} \\
&= \sum_{i,j} R_{ijji} \mu_i \mu_j + \lambda \sum_{i,j} h_{ij} h^{ij} \\
&= \sum_{i,j} \sec(e_i,e_j) \mu_i \mu_j + \sum_{i,j} \sec(e_i,e_j) \mu_i^2 \\
&= \half \sum_{i,j} \sec(e_i,e_j) 2 \mu_i \mu_j + \half \sum_{i,j} \sec(e_i,e_j) \mu_i^2 + \half \sum_{i,j} \sec(e_i,e_j) \mu_j^2 \\
&= \half \sum_{i,j} \sec(e_i,e_j) (\mu_i + \mu_j)^2.
\end{align*}
Now, integrating by parts and using Koiso's Bochner formula \cite{Koiso1978} together with the above equation,
\begin{align*}
(\mbf{L}_0 h,h)
&= -\|\nabla h\|^2 + \int R_{ijk\ell} h^{ij} h^{kl} - \int R_i^j h_j^k h_k^i 
   + \half \int \sum_{i,j} \sec(e_i,e_j) (\mu_i + \mu_j)^2 \\
&= -\half \|T\|^2 - \|\delta h\|^2 + K(n-2) \|h\|^2 + K \|\tr h\|^2 \\
&\leq K(n-2) \|h\|^2,
\end{align*}
as desired.
\end{proof}

Regarding linear stability of $\phi_0$, we have the following, which follows from integration by parts.

\begin{lem}\label{lem:hrf-stable}
The constant map $\phi_0$ is weakly linearly stable with respect to the operator $\mathbf{L}_{1}$.  Its null eigenspace is the space of constant variations $\psi \in C^\infty(\phi_0^*T\mcl{N})$, whose dimension is equal to $\dim \mcl{N}$.
\end{lem}


We now turn to the proof of Theorem \ref{thm:hrf-stab}.  See the appendix for the statement of Simonett's theorem, \cite{Knopf2009}*{Section 2} for a more detailed description of its application as used here, and \cite{Simonett1995} for the original statement.  

If $\mcl{V} \rightarrow \mcl{B}$ is a vector bundle, let $\mfr{h}^{r+\rho}(\mcl{V})$ denote the completion of the vector space $C^\infty(\mcl{V})$ with respect to the $r+\rho$ little-H\"older norm.  For fixed $0 < \sigma < \rho < 1$, consider the following densely and continuously embedded spaces:
\begin{align*}
\mbb{E}_0 &:=\mfr{h}^{0+\sigma}(S^2 \mcl{B}) \times 
            \mfr{h}^{0+\sigma}(\phi_0^* T\mcl{N}) \\
\cup \, & \\            
\mbb{X}_0 &:=\mfr{h}^{0+\rho}(S^2 \mcl{B}) \times 
            \mfr{h}^{0+\rho}(\phi_0^* T\mcl{N}) \\
\cup \, & \\               
\mbb{E}_1 &:=\mfr{h}^{2+\sigma}(S^2 \mcl{B}) \times 
            \mfr{h}^{2+\sigma}(\phi_0^* T\mcl{N}) \\
\cup \, & \\               
\mbb{X}_1 &:=\mfr{h}^{2+\rho}(S^2 \mcl{B}) \times 
            \mfr{h}^{2+\rho}(\phi_0^* T\mcl{N})
\end{align*}
For fixed $1/2\leq\beta<\alpha<1$, define the continuous interpolation spaces
\begin{align*}
\mbb{X}_\beta := (\mbb{X}_0, \mbb{X}_1)_\beta,\quad
\mbb{X}_\alpha := (\mbb{X}_0, \mbb{X}_1)_\alpha.
\end{align*}
For fixed $0<\epsilon\ll 1$, let $\mbb{G}_\beta$ be the open $\epsilon$-ball around $(g_0,\phi_0)$ in $\mbb{X}_\beta$, and define $\mbb{G}_\alpha := \mbb{G}_\beta \cap \mbb{X}_\alpha$.

\begin{proof}[Proof of Theorem \ref{thm:hrf-stab}.]
This proof follows that of Theorem 1 in \cite{Knopf2009}, and due to the abstract nature of Simonett's Theorem, essentially all of the details go through unchanged.  Indeed, because we assume that the metric is strictly linearly stable, it does not correspond to any center manifolds.  Still, we briefly describe the four main steps in the proof.

First, one must show that the complexification of the operator $\mbf{L} := (\mbf{L}_0,\mbf{L}_1,\mbf{L}_2)$ in \eqref{eq:hrf-L0}-\eqref{eq:hrf-L1} (which acts component-wise) is sectorial.  This holds exactly as in  \cite{Knopf2009}.  With this established, one checks that conditions (\ref{SimFirst})-(\ref{SimLast}) of Simonett's theorem hold, and this follows exactly as in \cite{Knopf2009}*{Lemmas 1 and 2}.  The second step is then to apply Simonett's theorem (Theorem \ref{thm:simonett}) to obtain, in particular, existence of exponentially attractive local center manifolds.  

Finally, one proves the uniqueness of a smooth center manifold consisting of fixed points of the flow \eqref{eq:cnhrdtf}, and convergence of the solution to a point in that center manifold.  Again, this is actually simpler in our case because of the strict linear stability of the metrics, and the arguments of \cite{Knopf2009} carry through without modification.
\end{proof}


\section{Ricci flow on warped products}\label{sec:wp}

\subsection{Setup}

Consider a multiply-warped product $(\mcl{M},\mbf{g})$ as described above, where
\[ \mcl{M} := \mcl{B} \times \prod_{\alpha=1}^m \mcl{F}_\alpha, \qquad
   \mbf{g} := g + \sum_{\alpha=1}^m e^{2\phi_\alpha} g_\alpha. \]
Let $\mcl{H},\mcl{V}_\alpha \subset T\mcl{M}$ denote the horizontal and $\alpha$-vertical distributions, respectively.  The Ricci curvature of $\mbf{g}$ is
\begin{equation}\label{eq:mwp-ric}
\begin{aligned}
{}^\mbf{g}\! \Rc|_{\mcl{H} \times \mcl{H}}
&= {}^g\! \Rc - \sum_{\alpha=1}^m n_\alpha \big[ \Hess(\phi_\alpha) + d\phi_\alpha \otimes d\phi_\alpha \big] \\
{}^\mbf{g}\! \Rc|_{\mcl{V_\alpha} \times \mcl{V_\alpha}}
&= {}^{g_\alpha}\! \Rc - \left[ \Delta \phi_\alpha + \sum_{\beta=1}^m n_\beta \langle d\phi_\alpha,d\phi_\beta \rangle \right] e^{2\phi_\alpha} g_\alpha \\
{}^\mbf{g}\! \Rc|_{\mcl{H} \times \mcl{V}_\alpha}
&= 0 \qquad \text{for all } \alpha =1,\dots,m \\
{}^\mbf{g}\! \Rc|_{\mcl{V}_\alpha \times \mcl{V}_\beta}
&= 0 \qquad \text{for all } \alpha,\beta =1,\dots,m \text{ with } \alpha \neq \beta 
\end{aligned}
\end{equation}
See, e.g., \cite{DobarroUnal2005}.  We wish to describe Ricci flow on $(\mcl{M},\mbf{g})$ in terms of the evolution of $g$ and the warping functions $\phi_\alpha$.

\begin{prop} Let $(\mcl{M} = \mcl{B} \times \prod_\alpha \mcl{F}_\alpha, \mbf{g}(t))$ be a solution to Ricci flow, with $\mcl{B}$ closed and $\mbf{g}(0) = g(0)+ \sum_\alpha e^{2\phi_\alpha(0)} g_\alpha(0)$ a multiply-warped product.  If $(\mcl{F}_\alpha,g_\alpha)$ is closed and $\mu_\alpha$-Einstein for each $\alpha$, then each $g_\alpha = g_\alpha(0)$ is constant under the flow, $\mbf{g}(t)$ is a multiply-warped product, and the evolutions of $g$ and the  $\phi_\alpha$ are given by
\begin{equation}\label{eq:mwprf-pre}
\begin{aligned}
\partial_t g    &= -2 \, {}^g\! \Rc + 2 \sum_{\alpha=1}^m n_\alpha \big[ \Hess(\phi_\alpha) +  d\phi_\alpha \otimes d\phi_\alpha \big] \\
\partial_t \phi_\alpha &= \Delta \phi_\alpha + \sum_{\beta=1}^m n_\alpha \langle d\phi_\alpha,d\phi_\beta \rangle - \mu_\alpha e^{-2\phi_\alpha} \qquad \alpha=1,\dots,m
\end{aligned}
\end{equation}
\end{prop}

\begin{proof}
Define $\widetilde{\mbf{g}}(t) = g(t) + \sum_\alpha e^{2\phi_\alpha(t)} g_\alpha$, where $g(t)$ and $\phi_\alpha(t)$ are solutions of \eqref{eq:mwprf-pre} with $g(0) = g_0$ and $\phi_\alpha(0) = (\phi_\alpha)_0$.  Using \eqref{eq:mwp-ric}, we see that the evolution of $\widetilde{\mbf{g}}(t)$ is
\begin{align*}
\partial_t \widetilde{\mbf{g}}(t)
&= \partial_t g(t) + \sum_\alpha \partial_t (e^{2\phi_\alpha(t)} g_\alpha) \\
&= -2 \, {}^g\! \Rc + 2 \sum_\alpha n_\alpha \big[ \Hess(\phi_\alpha) + d \phi_\alpha \otimes d\phi_\alpha \big] 
  + 2 \sum_\alpha e^{2\phi_\alpha(t)} \partial_t \phi_\alpha(t) \, g_\alpha \\
&= -2 \, {}^{\widetilde{\mbf{g}}}\! \Rc|_{\mcl{H} \times \mcl{H}} + 2 \sum_\alpha \left[ \Delta \phi_\alpha + \sum_\beta n_\beta \langle d\phi_\alpha,d\phi_\beta \rangle  - \mu_\alpha e^{-2\phi_\alpha(t)} \right] e^{2\phi_\alpha(t)} g_\alpha \\
&= -2 \, {}^{\widetilde{\mbf{g}}}\! \Rc|_{\mcl{H} \times \mcl{H}} + 2 \sum_\alpha \Big[ \Delta \phi_\alpha + \sum_\beta n_\beta \langle d\phi_\alpha,d\phi_\beta \rangle \Big] e^{2\phi(t)} g_\alpha - 2 \sum_\alpha \mu_\alpha g_\alpha \\
&= -2 \, {}^{\widetilde{\mbf{g}}}\! \Rc|_{\mcl{H} \times \mcl{H}} - 2 \sum_\alpha {}^{\widetilde{\mbf{g}}} \! \Rc|_{\mcl{V_\alpha} \times \mcl{V_\alpha}} \\
&= -2 \, {}^{\widetilde{\mbf{g}}}\! \Rc,
\end{align*}
since $\mu_\alpha g_\alpha = {}^{g_\alpha} \!\Rc$.  This means $\widetilde{\mbf{g}}(t)$ solves Ricci flow with $\widetilde{\mbf{g}}(0) = g_0 + \sum_\alpha e^{2(\phi_\alpha)_0} g_\alpha$.  By uniqueness of solutions of Ricci flow, for any solution $\mbf{g}(t)$ of Ricci flow with $\mbf{g}(0) = g_0 + \sum_\alpha e^{2(\phi_\alpha)_0} g_\alpha$, we must have $\widetilde{\mbf{g}}(t) = \mbf{g}(t)$.  This means the multiply-warped product structure is preserved and $\mbf{g}(t) = g(t) + \sum_\alpha e^{2\phi_\alpha(t)} g_\alpha$ must satisfy \eqref{eq:mwprf-pre}.
\end{proof}

We can simplify the system \eqref{eq:mwprf-pre}.  For $f = -\sum_\alpha n_\alpha \phi_\alpha \in C^\infty(\mcl{M})$, the following Lie derivatives are easily computed:
\[ \begin{aligned}
\mcl{L}_{\nabla f} g    &= -2 \sum_\alpha \Hess(\phi_\alpha), \\
\mcl{L}_{\nabla f} \phi_\alpha &= - \sum_\beta n_\beta \langle d\phi_\alpha,d\phi_\beta \rangle. 
\end{aligned} \]
Pulling back by diffeomorphisms generated by $\nabla f$ amounts to adding these Lie derivatives to the equations in \eqref{eq:mwprf-pre}.  From this we obtain the system
\begin{equation}\label{eq:mwprf}
\begin{aligned}
\partial_t g      &= -2 \, {}^g\! \Rc + 2 \sum_\alpha n_\alpha \, d\phi_\alpha \otimes d\phi_\alpha \\
\partial_t \phi_\alpha &= \Delta \phi_\alpha - \mu_\alpha e^{-2\phi_\alpha} \qquad \alpha=1,\dots,m
\end{aligned}
\end{equation}
which we call \textit{multiply-warped product Ricci flow}.  If we define a map 
\[ \Phi : \mcl{B} \longrightarrow \R^m \]
\[ \Phi(x) := \big(\phi_1(x),\dots,\phi_m(x)\big) \]
then we can abbreviate a solution of \eqref{eq:mwprf} as $\big(g(t),\Phi(t)\big)$.

\begin{rem} 
The system \eqref{eq:mwprf} does not depend on the fibers $(\mcl{F}_\alpha,g_\alpha)$ at all, except for the Einstein constants $\mu_\alpha$ and the dimensions $n_\alpha$.  Considering that system abstractly (that is, outside the context of Ricci flow on $\mcl{M}$), we can therefore allow the fibers $\mcl{F}_\alpha$ to be non-compact.  For example, a modification of the DeTurck trick shows short-time existence of solutions in that case.
\end{rem}

\subsection{Estimates} 

We need to understand the evolution of various geometric quantities under the flow \eqref{eq:wprf}.  The following equations can be proved in a manner similar to those for harmonic-Ricci flow with 1-dimensional target, as found in \cite{List2008}.  

\begin{lem}\label{lem:hrf-evo}
Let $\big(g(t),\Phi(t)\big)$ be a solution of \eqref{eq:wprf}.  We have the following evolution equations, for $\alpha = 1,\dots,m$.
\[ \begin{aligned}
\partial_t \partial_i \phi_\alpha
&= \Delta \partial_i \phi_\alpha 
- R_i^p \partial_p \phi_\alpha 
+ 2 \mu_\alpha e^{-2\phi_\alpha} \partial_i \phi_\alpha \\
\partial_t |d\phi_\alpha|^2
&= \Delta |d\phi_\alpha|^2 
- 2 |\Hess \phi_\alpha|^2 
- 2\sum_\beta n_\beta \langle d\phi_\alpha,d\phi_\beta \rangle^2 
+ 4 \mu_\alpha e^{-2\phi_\alpha} |d\phi_\alpha|^2 
\end{aligned} \]
\end{lem}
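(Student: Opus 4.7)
The plan is to derive each equation by direct computation, exactly parallel to the one-dimensional target computations of \cite{List2008}, exploiting the fact that the source of almost every term is either the Ricci identity or the Bochner formula. Throughout, I will freely interchange $\partial_t$ with spatial partial derivatives and use that for a function, $\nabla_i \phi = \partial_i \phi$.

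For the first equation, I begin by commuting time and space derivatives:
\[
\partial_t \nabla_i \phi = \nabla_i (\partial_t \phi) = \nabla_i \Delta \phi - \nabla_i(\mu e^{-2\phi}) = \nabla_i \Delta \phi + 2\mu e^{-2\phi}\nabla_i \phi.
\]
The only nontrivial step is rewriting $\nabla_i \Delta \phi$. Applying the Ricci identity $[\nabla_i,\nabla_j]\nabla_k\phi = -R_{ijk}{}^{\ell}\nabla_\ell \phi$ and tracing gives the standard commutator identity
\[
\nabla_i \Delta \phi = \Delta \nabla_i \phi - R_i{}^{p}\nabla_p \phi,
\]
which plugged in yields the first displayed formula.

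For the second equation, I will differentiate $|d\phi|^2 = g^{ij}\nabla_i\phi\,\nabla_j\phi$ in time using the product rule. The evolution of the inverse metric is $\partial_t g^{ij} = 2R^{ij} - 2m\,\nabla^i\phi\,\nabla^j\phi$, which comes directly from \eqref{eq:wprf}. Combining this with the first equation,
\[
\partial_t|d\phi|^2 = (2R^{ij}-2m\nabla^i\phi\nabla^j\phi)\nabla_i\phi\nabla_j\phi + 2\nabla^j\phi\bigl(\Delta\nabla_j\phi - R_j{}^{p}\nabla_p\phi + 2\mu e^{-2\phi}\nabla_j\phi\bigr).
\]
The two $R^{ij}\nabla_i\phi\nabla_j\phi$ terms cancel exactly, and the pleasant observation is precisely this Ricci cancellation. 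The Bochner identity $\Delta|d\phi|^2 = 2|\nabla^2\phi|^2 + 2\nabla^j\phi\,\Delta\nabla_j\phi$ then converts $2\nabla^j\phi\,\Delta\nabla_j\phi$ into $\Delta|d\phi|^2 - 2|\nabla^2\phi|^2$, giving the stated formula.

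There is no real obstacle: the entire proof is a bookkeeping calculation once one remembers the two standard identities (Ricci commutator and Bochner). The only place that requires slight care is the sign and normalization of the Ricci terms, and making sure the contribution from $\partial_t g^{ij}$ (which is genuinely modified by the presence of the coupling term $2m\,d\phi\otimes d\phi$ in \eqref{eq:wprf}) is tracked correctly; the $-2m|d\phi|^4$ in the final equation is exactly the trace of this coupling term.
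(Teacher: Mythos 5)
Your computation is correct and is exactly the argument the paper has in mind: it gives no proof, deferring to the one-dimensional-target computations of List, which are precisely the direct calculation you carry out (commute $\partial_t$ with $\partial_i$, apply the contracted Ricci commutator, and use the Bochner rearrangement $\Delta|d\phi|^2 = 2|\nabla^2\phi|^2 + 2\nabla^j\phi\,\Delta\nabla_j\phi$ so that the Ricci terms from $\partial_t g^{ij}$ and from the commutator cancel). All signs and coefficients, including the $-2m|d\phi|^4$ from the coupling term, check out.
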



We will use the estimates from the lemma with the following version of the Maximum Principle.

\begin{thm}
Suppose $g(t)$ is a family of metrics on a closed manifold $\mcl{M}^n$, $X(t)$ is a time-dependent vector field on $\mcl{M}$, and $F : \R \times [0,T) \rightarrow \R$ is a Lipschitz continuous function.  Consider the semi-linear heat equation
\begin{equation}\label{eq:heat}
\partial_t u = \Delta_{g(t)} u + \langle X(t), \nabla u \rangle + F(u,t),
\end{equation}
and the corresponding ordinary differential equation
\begin{equation}\label{eq:ode}
\frac{d}{dt} U = F(U,t),
\end{equation}
for functions $u : \mcl{M} \times [0,T) \rightarrow \R$, $U : [0,T') \rightarrow \R$.

Let $u(x,t)$ be a $C^2$ solution of \eqref{eq:heat}, and let $U_1$ and $U_2$ solve \eqref{eq:ode} with $U_1(0) = \min_\mcl{M} u(x,0)$ and $U_2(0)= \max_\mcl{M} u(x,0)$, respectively.  In particular,
\[ U_1(0) \leq u(x,0) \leq U_2(0) \]
for all $x \in \mcl{M}$.  Then as long as these functions exist, 
\[ U_1(t) \leq u(x,t) \leq U_2(t), \]
for all $x \in \mcl{M}$.
\end{thm}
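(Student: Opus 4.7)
The plan is to establish the upper bound $u(x,t) \leq U_2(t)$ by a standard perturbation argument; the lower bound $U_1(t) \leq u(x,t)$ then follows symmetrically by applying the same argument to $-u$ against $-U_1$, noting that $w := -u$ satisfies a semilinear heat equation of the same form with nonlinearity $\widetilde F(w,t) := -F(-w,t)$, which is again Lipschitz with the same constant. Setting $v(x,t) := u(x,t) - U_2(t)$ and subtracting \eqref{eq:ode} from \eqref{eq:heat} yields
\[
\partial_t v = \Delta_{g(t)} v + \langle X(t), \nabla v \rangle + \bigl[ F(u,t) - F(U_2(t),t) \bigr],
\]
with $v(\cdot,0) \leq 0$ on $\mcl{M}$. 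Since $\mcl{M}$ is closed and $u, U_2$ are continuous, on any compact subinterval $[0,T]$ of the common interval of existence they take values in a common bounded set of $\R$, on which $F(\cdot,t)$ admits a Lipschitz constant $L$ uniform in $t \in [0,T]$.

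Next I would introduce the strict perturbation $v_\epsilon(x,t) := v(x,t) - \epsilon e^{(L+1)t}$ for arbitrary $\epsilon > 0$, noting $v_\epsilon(\cdot,0) \leq -\epsilon < 0$. Suppose for contradiction that $v_\epsilon$ attains the value $0$ somewhere on $\mcl{M} \times (0,T]$; by continuity and the compactness of $\mcl{M}$ there is a \emph{first} such time $t_0 > 0$ and a point $x_0 \in \mcl{M}$ with $v_\epsilon(x_0,t_0) = 0$. By construction $(x_0,t_0)$ is a space-time maximum of $v_\epsilon$ on $\mcl{M} \times [0,t_0]$, so the usual interior-extremum inequalities give $\partial_t v_\epsilon(x_0,t_0) \geq 0$, $\nabla v_\epsilon(x_0,t_0) = 0$, and $\Delta_{g(t_0)} v_\epsilon(x_0,t_0) \leq 0$; the spatial derivatives coincide with those of $v$ since the correction $-\epsilon e^{(L+1)t}$ is spatially constant. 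Substituting into the evolution equation for $v$ and using $F(u,t_0) - F(U_2(t_0),t_0) \leq L\,|v(x_0,t_0)| = L\epsilon e^{(L+1)t_0}$ together with the vanishing of the drift at $\nabla v = 0$, one obtains
\[
0 \leq \partial_t v_\epsilon(x_0,t_0) \leq L\epsilon e^{(L+1)t_0} - \epsilon(L+1)e^{(L+1)t_0} = -\epsilon e^{(L+1)t_0} < 0,
\]
a contradiction. Hence $v_\epsilon < 0$ throughout $\mcl{M} \times [0,T]$, and sending $\epsilon \downarrow 0$ yields $u \leq U_2$ on $[0,T]$; exhausting the common interval of existence gives the upper bound in full.

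The potential obstacle is more a matter of bookkeeping than of ideas. Two points must be verified: (i) that the Lipschitz constant of $F$ can be chosen uniformly on the range swept out by $u$ and $U_2$, which is immediate from continuity of these functions and compactness of $\mcl{M} \times [0,T]$; and (ii) that the interior-extremum inequalities are unaffected by the time-dependence of $g(t)$ and $X(t)$ --- but the spatial Hessian at a maximum is negative semidefinite regardless of the metric used to trace it, and the drift term contributes nothing because $\nabla v = 0$ at $(x_0,t_0)$. With these observations the argument proceeds exactly as outlined, and the lower bound is obtained by repeating the whole procedure for $-u$ with target $-U_1$.
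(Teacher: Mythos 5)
Your argument is correct, and it is the standard proof of the scalar maximum principle: compare $u$ with the ODE solution via a strict perturbation $\epsilon e^{(L+1)t}$, run a first-time-of-contact contradiction using the interior-extremum inequalities (which, as you note, are insensitive to the time-dependence of $g(t)$ and $X(t)$ since the Hessian is traced against a positive-definite metric and the drift vanishes at a critical point), and obtain the lower bound by the symmetry $u\mapsto -u$, $F\mapsto -F(-\cdot,t)$. The paper itself states this theorem without proof, citing it as a known version of the maximum principle, so there is no in-paper argument to compare against; your write-up supplies a complete and correct one.
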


\begin{lem}\label{lem:phi-estimate}
Suppose that $\big(g(t),\Phi(t)\big)$ is a solution of \eqref{eq:mwprf} on $\mcl{B} \times \prod_\alpha \mcl{F}_\alpha$, where $\mcl{B}$ is closed and each $(\mcl{F}_\alpha,g_\alpha)$ is Einstein with constant $\mu_\alpha < 0$.  Then for each $\alpha$ there are constants $c_\alpha,d_\alpha,C_\alpha$ such that
\begin{align}
e^{2c_\alpha} - 2 \mu_\alpha t \leq\, e^{2\phi_\alpha(x,t)} \leq e^{2d_\alpha} - 2 \mu_\alpha t \label{fn-bounds-neg} \\
0 \leq |d\phi_\alpha(x,t)|^2 \leq \frac{C_\alpha}{(t+1)^2} \label{eq:d-psi-decay}
\end{align}
for all for all $x \in \mcl{B}$ and $t$ where the solution exists.
\end{lem}

\begin{proof}
The \textsc{ode} associated with the evolution of $\phi_\alpha$ is
\[ \frac{d}{dt} U = F_\alpha(U) := -\mu_\alpha e^{-2U}. \]
For initial data $U(0) = c$, this has solution $U(t) = \half\log( e^{2c} - 2\mu_\alpha t)$.  Let $U_1$ and $U_2$ solve the \textsc{ode} with initial data $c_\alpha := \min_\mcl{B} \phi_\alpha(x,0)$ and $d_\alpha := \max_\mcl{B} \phi_\alpha(x,0)$, respectively.  Then
\begin{equation}\label{eq:fn-bounds-log}
\frac{1}{2} \log ( e^{2c_\alpha} - 2\mu_\alpha t) = U_1(t) \leq \phi_\alpha(x,t) \leq U_2(t) = \frac{1}{2} \log ( e^{2d_\alpha} - 2\mu_\alpha t)
\end{equation}
for as long as the functions exist, and for all $x \in \mcl{B}$.  Exponentiating, we see that
\[ e^{2c_\alpha} - 2 \mu_\alpha t \leq e^{2\phi_\alpha(x,t)} \leq e^{2d_\alpha} - 2 \mu_\alpha t. \]
When $\mu_\alpha < 0$, this means the warping factor $e^{2\phi_\alpha}$ grows to infinity.  Also note that when $\mu_\alpha = 0$ or $\mu_\alpha > 0$, $e^{2\phi_\alpha(x,t)}$ remains bounded or reaches zero in finite time, respectively.

Now let us find bounds on $|d\phi_\alpha|^2$.  First, from the behavior of $\phi_\alpha$ above, we have
\[ 4 \mu_\alpha e^{-2\phi_\alpha} \leq \frac{4 \mu_\alpha}{e^{2d_\alpha}-2\mu_\alpha t}, \]
Then $|d\phi_\alpha|^2$ evolves according to
\begin{align*}
\partial_t |d\phi_\alpha|^2
&= \Delta |d\phi_\alpha|^2 
- 2|\Hess \phi_\alpha|^2 
- 2\sum_\beta n_\alpha \langle d\phi_\alpha,d\phi_\beta \rangle^2 
+ 4 \mu_\alpha e^{-2\phi_\alpha} |d\phi_\alpha|^2 \\ 
&\leq \Delta |d\phi_\alpha|^2 + \frac{4 \mu_\alpha}{e^{2d_\alpha} - 2\mu_\alpha t} |d\phi_\alpha|^2.
\end{align*}
That is, $v := |d\phi_\alpha|^2$ is a subsolution of $\partial_t u = \Delta u + G_\alpha(u,t)$, where 
\[ G_\alpha(u,t) := \frac{4 \mu_\alpha}{e^{2d_\alpha} - 2\mu_\alpha t} u. \]
The corresponding \textsc{ode}, with initial data $U(0)=U_0$, has solution
\[ U(t) = \frac{e^{4d_\alpha} U_0}{(e^{2d_\alpha}-2\mu_\alpha t)^2}. \]
Taking $U_0 := \max_\mcl{B} |d\phi_\alpha(x,0)|^2$, the Maximum Principle gives
\begin{equation} 
0 \leq |d\phi_\alpha(x,t)|^2 \leq \frac{e^{4d_\alpha} U_0}{(e^{2d_\alpha}-2\mu_\alpha t)^2} \leq \frac{C_\alpha}{(t+1)^2} 
\end{equation}
for all time and all $x \in \mcl{B}$, for some $C_\alpha>0$.  
\end{proof}

\subsection{Stability}

In this section we follow the outline given in the introduction to prove Theorem \ref{thm:wprf-stab}.

When $\mu_\alpha<0$, the warping functions $\phi_\alpha$ grow in a controlled way:~the lower and upper bounds of $\phi_\alpha$ on $\mcl{B}$ both go to infinity by \eqref{fn-bounds-neg}, but we also have $|d\phi_\alpha|^2 \rightarrow 0$ as $t \rightarrow \infty$ by \eqref{eq:d-psi-decay}.  It is natural, therefore, to hope that each $\phi_\alpha$ converges to a constant function, but the growth condition implies that this constant should be $\infty$.  This means that some kind of normalization is needed for $\phi_\alpha$ (as well as for $g$).  

Therefore, let $\big(\overline{g}(\overline{t}),\overline{\Phi}(\overline{t})\big)$ solve \eqref{eq:mwprf} for $\overline{t} \geq 0$.  For $\lambda < 0$, let $\sigma(\overline{t}) := 1-2\lambda \overline{t}$, let $A_\alpha$ be constants, and consider the transformation  
\[ \big( \overline{g}(\overline{t}), \overline{\Phi}(\overline{t}) \big) \mapsto \big( g(t), \Phi(t) \big), \]
where
\[ 
t := -\frac{1}{2\lambda} \log \sigma(\overline{t}), \quad
g := \sigma^{-1} \overline{g}, \quad 
\phi_\alpha := 
\begin{cases}
\overline{\phi}_\alpha - \frac{1}{2} \log \sigma + A_\alpha & \text{if } \mu_\alpha < 0 \\
\overline{\phi}_\alpha & \text{if } \mu_\alpha = 0
\end{cases}.
\]
Note that $\phi_\alpha$ is simply a translate of $\overline{\phi}_\alpha$.  As such, $d\phi_\alpha = d\overline{\phi}_\alpha$, and so all spatial derivative behavior of $\phi_\alpha$ is the same as that of $\overline{\phi}_\alpha$.  Additionally, when $\mu_\alpha < 0$ it is easy to see from \eqref{fn-bounds-neg} that
\[ \lim_{t \rightarrow \infty} \phi_\alpha(x,t) = A_\alpha \]
for all $x \in \mcl{B}$.

A computation shows that this transformation results in the system
\begin{equation}\label{eq:cnmwprf}
\begin{aligned}
\partial_t g    &= -2 \Rc + 2 \sum_\alpha n_\alpha d\phi_\alpha \otimes d\phi_\alpha + 2\lambda g  \\
\partial_t \phi_\alpha &= 
\begin{cases}
\Delta \phi_\alpha 
- \mu_\alpha e^{-2(\phi_\alpha-A_\alpha)} 
+ \lambda & \text{if } \mu_\alpha < 0 \\
\Delta \phi_\alpha & \text{if } \mu_\alpha = 0
\end{cases}, 
\qquad \alpha = 1,\dots,m
\end{aligned}
\end{equation}
which we call \textit{curvature-normalized multiply-warped product Ricci flow}.

As before, we use the DeTurck trick to make the \eqref{eq:cnmwprf} system strictly parabolic.  Fix a background metric $g_0$ (which we can take to be a fixed-point metric) and define a vector field $W$ depending on $g(t)$ by
\[ W^k := g^{ij}(\Gamma_{ij}^k - {}^{g_0}\Gamma_{ij}^k) \]
for $k = 1,\dots,n$.  Let $F_{t}$ be diffeomorphisms generated by $W(t)$, with initial condition $F_0 = \id$.  The one-parameter family $\big(F_t^* g(t), F_t^* \Phi(t)\big)$ is the solution of 
\begin{equation}\label{eq:cnmwprdtf}
\begin{aligned}
\partial_t g    &= -2 \Rc + 2 \sum_\alpha n_\alpha d\phi_\alpha \otimes d\phi_\alpha + 2\lambda g - \mcl{L}_W g \\
\partial_t \phi_\alpha &= 
\begin{cases}
\Delta \phi_\alpha 
- \mu_\alpha e^{-2(\phi_\alpha-A_\alpha)} 
+ \lambda - \mcl{L}_W \phi_\alpha & \text{if } \mu_\alpha < 0 \\
\Delta \phi_\alpha - \mcl{L}_W \phi_\alpha & \text{if } \mu_\alpha = 0
\end{cases}, 
\qquad \alpha = 1,\dots,m
\end{aligned}
\end{equation}
which we call the \textit{curvature-normalized multiply-warped product Ricci--DeTurck flow}.  

A stationary solution of (\ref{eq:cnmwprf}) is also a stationary solution of the curvature-normalized multiply-warped product Ricci-DeTurck flow, and we can describe a large class of such fixed points.

\begin{lem}\label{lem:wprf-fixed-points}
Suppose that $(\mcl{B}^n \times \prod \mcl{F}_\alpha,g_0 + e^{2(\phi_\alpha)_0} g_\alpha)$ is a multiply-warped product with $(\mcl{B},g)$ closed and $\lambda$-Einstein and suppose that each $\big(\mcl{F}_\alpha,(g_\alpha)_0\big)$ is $\mu_\alpha$-Einstein with either $\mu_\alpha=\lambda$ or $\mu_\alpha=0$.  Let each $(\phi_\alpha)_0 : \mcl{M} \rightarrow \R$ be a constant function, and let $A_\alpha = (\phi_\alpha)_0$.  Then $(g_0,\Phi_0)$ a stationary solution of \eqref{eq:cnmwprf} and \eqref{eq:cnmwprdtf}.
\end{lem}


To analyze the stability of a fixed point$(g_0,\Phi_0)$ of flow \eqref{eq:cnmwprdtf}, we must compute the linearization of the flow.  Let $\big(\widetilde{g}(\epsilon),\widetilde{\Phi}(\epsilon)\big)$ be a variation of $(g_0,\Phi_0)$ such that
\begin{equation}\label{eq:wp-var}
\begin{aligned}
\widetilde{g}(0) &= g_0, 
                 &\partial_\epsilon \big|_{\epsilon=0} \widetilde{g}(\epsilon) 
								 &= h \in \mcl{S}^2 \mcl{B}, \\
\widetilde{\phi}_\alpha(0) &= (\phi_\alpha)_0, 
											     &\partial_\epsilon \big|_{\epsilon=0} \widetilde{\phi}_\alpha(\epsilon) 
													 &= \psi_\alpha \in C^\infty(\mcl{B})
\end{aligned}
\end{equation}
for $\alpha = 1,\dots,m$.

\begin{lem}\label{lem:cnmwprdtf-lin}
The linearization of the curvature-normalized multiply-warped product Ricci-DeTurck flow at a fixed point $(g_0,\Phi_0)$ where $g_0$ is $\lambda$-Einstein and $\Phi_0$ constant is the autonomous, self-adjoint, strictly parabolic system
\begin{subequations}
\begin{align}
\partial_t    h & = \mathbf{L}_{0} h := \Delta_{\ell} h + 2\lambda h \label{eq:wp-lin-h} \\
\partial_t \psi_\alpha & = \mathbf{L}_\alpha \psi_\alpha := 
\begin{cases}                                                        
\Delta \psi_\alpha + 2\lambda \psi_\alpha \label{eq:wp-lin-psi} & \text{if } \mu_\alpha = \lambda \\
\Delta \psi_\alpha & \text{if } \mu_\alpha = 0 \\
\end{cases}
\end{align}
\end{subequations}
for $\alpha = 1,\dots,m$.
\end{lem}

\begin{proof}
The first equation is essentially the same as in Lemma \ref{lem:hrf-deturck-lin}, so we only consider the second equation.  The Laplacian term in the second equation is already linear.  When $\mu_\alpha = \lambda$, we use the chain rule to understand the second term:
\[ \partial_\epsilon|_{\epsilon=0} \big( -\lambda e^{-2(\tilde{\phi}_\alpha(\epsilon)-(\phi_\alpha)_0)} \big)
= 2 \lambda \psi e^{-2(\tilde\phi_\alpha(0)-(\phi_\alpha)_0)} = 2\lambda \psi. \]
The Lie derivative from the DeTurck trick term is handled essentially as in the proof of Lemma \ref{lem:hrf-deturck-lin}.
\end{proof}
  
Now assume that $(g_0,\Phi_0)$ is a fixed point of the curvature-normalized multiply-warped product Ricci-DeTurck flow with $\Phi_0$ constant and $g_0$ a strictly linearly stable $\lambda$-Einstein metric.

\begin{lem}\label{lem:wp-stable}
Let $\phi_\alpha$ be constant function.  If $\mu_\alpha = 0$, then $\phi_\alpha$ is weakly linearly stable with respect to the operator $\mbf{L}_\alpha$, and the null eigenspace is the 1-dimensional space of constant functions on $\mcl{B}$.  If $\mu_\alpha = \lambda$, then $\phi_\alpha$ is strictly linearly stable with respect to the operator $\mbf{L}_\alpha$.
\end{lem}

\begin{proof}
When $\mu_\alpha = \lambda$, we integrate by parts:
\[ (\mbf{L}_\alpha \psi,\psi) = \int_\mcl{B} (\Delta \psi + 2\lambda\psi) \psi \, d\mu_g = -\|\nabla\psi\|^2 + 2\lambda \|\psi\|^2 \leq 0, \]
since $\lambda < 0$.  We have equality exactly when $\psi$ is the zero function.
\end{proof}

We now turn to the proof of the Theorem \ref{thm:wprf-stab}.  Again, see the appendix for the statement of Simonett's theorem.  Recall that if $\mcl{V} \rightarrow \mcl{M}$ is a vector bundle, then $\mfr{h}^{r+\rho}(\mcl{V})$ denotes the completion of the vector space $C^\infty(\mcl{V})$ with respect to the $r+\rho$ little-H\"older norm, and for brevity, let $\mfr{h}^{r+\rho}(\mcl{B})$ denote the corresponding completion of $C^\infty(\mcl{B})$.

For fixed $0 < \sigma < \rho < 1$, consider the following densely and continuously embedded spaces:
\begin{align*}
\mbb{E}_0 &:=\mfr{h}^{0+\sigma}(S^2 \mcl{B}) \times 
            \big( \mfr{h}^{0+\sigma}(\mcl{B}) \big)^m \\
\cup \, & \\            
\mbb{X}_0 &:=\mfr{h}^{0+\rho}(S^2 \mcl{B}) \times             
            \big( \mfr{h}^{0+\rho}(\mcl{B}) \big)^m \\
\cup \, & \\               
\mbb{E}_1 &:=\mfr{h}^{2+\sigma}(S^2 \mcl{B}) \times 
            \big( \mfr{h}^{2+\sigma}(\mcl{B}) \big)^m \\
\cup \, & \\               
\mbb{X}_1 &:=\mfr{h}^{2+\rho}(S^2 \mcl{B}) \times 
            \big( \mfr{h}^{2+\rho}(\mcl{B}) \big)^m
\end{align*}
For fixed $1/2\leq\beta<\alpha<1$, define the continuous interpolation spaces
\begin{align*}
\mbb{X}_\beta := (\mbb{X}_0, \mbb{X}_1)_\beta,\quad
\mbb{X}_\alpha := (\mbb{X}_0, \mbb{X}_1)_\alpha.
\end{align*}
For fixed $0<\epsilon\ll 1$, let $\mbb{G}_\beta$ be the open $\epsilon$-ball around $(g_0,\phi_0)$ in $\mbb{X}_\beta$, and define $\mbb{G}_\alpha := \mbb{G}_\beta \cap \mbb{X}_\alpha$.

\begin{proof}[Proof of Theorem \ref{thm:wprf-stab}.]
Modulo the details of the H\"older space setup, this is the same as the proof of Theorem \ref{thm:hrf-stab} so we omit the details.  We note, however, that each $\alpha$ such that $\mu_\alpha=0$ gives rise to a center manifold, due to weak linear stability.  This does not happen when $\mu_\alpha = \lambda < 0$.
\end{proof} 

\begin{rem}
One may compare Theorem \ref{thm:wprf-stab} with \cite{LottSesum2011}*{Theorem 1.1}.  Those authors prove convergence of warped product Ricci flow solutions when the base has dimension two, although different techniques are involved.
\end{rem}

\begin{rem}
Following Perelman (for the Ricci flow) and List and M\"uller (for harmonic-Ricci flow), the flow \eqref{eq:wprf} is the gradient flow of a certain energy functional.  For example, when $m=1$ and $(\mcl{F}^k,h)$ is $\mu$-Einstein, given a metric $g$ on $\mcl{B}$ and functions $\phi, f : \mcl{B} \rightarrow \R$, the energy functional is
\[ \msc{F}(g,\phi,f) := \int_\mcl{B} (R - k |d\phi|^2 + k\mu e^{-2\phi} + |df|^2) e^{-f} \, dV. \]
\end{rem}


\section{Locally $\R^N$-invariant Ricci flow}\label{sec:irf}

\subsection{Setup}\label{subsec:irf-setup}

The manifolds that we will consider in this section have a special bundle structure.  Let $\mcl{B}$ be a connected, oriented manifold, and let $\mcl{E} \xrightarrow{p} \mcl{B}$ be a flat $\R^N$-vector bundle.  We consider $\mcl{M} \xrightarrow{\pi} \mcl{B}$ to be a principal $\R^N$-bundle, twisted by $\mcl{E}$.  That is, there exists a smooth map
\[ \mcl{E} \times_{\mcl{B}} \mcl{M} = \bigcup_{b \in \mcl{B}} \mcl{E}_b \times \mcl{M}_b 
\longrightarrow \mcl{M} \]
that, over each point $b \in \mcl{B}$, gives a free and transitive action that is consistent with the flat connection on $\mcl{E}$.  This means that if $\mcl{U} \subset \mcl{B}$ is such that $\mcl{E}_\mcl{U} \rightarrow \mcl{U}$ is trivializable, then $\pi^{-1}(\mcl{U})$ has a free $\R^N$ action.  Let $\mcl{M}$ have a connection $A$ such that $A|_{\pi^{-1}(U)}$ is an $\R^N$-valued connection.  If we assume that $\mcl{M}$ also has a flat connection itself, then $A$ is globally an $\R^N$-valued 1-form.

We will use this bundle structure to describe local coordinates for $\mcl{M}$.  Let $\mcl{U} \subseteq \mcl{B}$ be an open set such that $\mcl{E}_\mcl{U} \rightarrow \mcl{U}$ is trivializable and has a local section $\sigma : \mcl{U} \rightarrow \pi^{-1}(\mcl{U})$.  Additionally, let $\rho : \R^n \rightarrow \mcl{U}$ be a parametrization of $\mcl{U}$, with coordinates $x^\alpha$, and let $e_i$ be a basis for $\R^N$.  Then we obtain coordinates $(x^\alpha,x^i)$ on $\pi^{-1}(\mcl{U})$ via
\begin{align*}
\R^n \times \R^N &\longrightarrow \pi^{-1}(\mcl{U}) \\
(x^{\alpha}, x^{i}) &\longmapsto (x^{i}e_{i}) \cdot \sigma \big( \rho(x^{\alpha}) \big)
\end{align*}
where $\cdot$ denotes the free $\R^{N}$-action described above.

Let $\mathbf{g}$ be a Riemannian metric on $\mcl{M}$ such that the $\R^N$-action is a local isometry.  With respect to the
coordinates above, one may write
\begin{align}\label{eq:the-metric}
\mathbf{g} 
&= g_{\alpha\beta} \, dx^{\alpha} \, dx^{\beta} + G_{ij} (dx^{i} + A_{\alpha}^{i} \, dx^{\alpha}) (dx^{j} + A_{\beta}^{j} \, dx^{\beta}). 
\end{align}
We will write this informally as $\mathbf{g} = (g,A,G)$, where $g(b) = g_{\alpha\beta}(b) \, dx^{\alpha} \, dx^{\beta}$ is locally a Riemannian metric on $\mcl{U} \subset \mcl{B}$, $A(b) = A_{\alpha}^{i}(b) \, dx^{\alpha}$ is locally the pullback by $\sigma$ of a connection on $\pi^{-1}(\mathcal{U}) \rightarrow \mathcal{U}$, and $G(b) = G_{ij}(b) \, dx^{i} \, dx^{j}$ is an inner product on the fiber $\mcl{M}_b$.

In \cite{Lott2010}, Lott considered metrics of the form \eqref{eq:the-metric} that evolve under Ricci flow and showed that the Ricci flow equation for $(\mcl{M},\mathbf{g})$ becomes three equations (see \cite{Lott2010}*{Equation (4.10)}):
\begin{align*}
\partial_t g &= -2 \Rc + \frac{1}{2} \nabla G \otimes \nabla G + dA \otimes dA \\
\partial_t A &= -\delta dA + \langle \nabla G,dA \rangle \\
\partial_t G &= \tau_{g,\theta} G - \frac{1}{2} dA \otimes dA
\end{align*}
This is called \textit{locally $\R^N$-invariant Ricci flow}.  We clarify the shorthand notation used in these equations.  Let $\mcl{S}_N := \SL(N,\R)/\SO(N)$ be the space of symmetric positive-definite bilinear forms of fixed determinant.  The tangent space $T_G \mcl{S}_N$ at $G \in \mcl{S}_N$ consists of symmetric bilinear forms with no trace.  There is a Riemannian metric $\theta$ on $T_G \mcl{S}_N$ defined by
\begin{equation}\label{eq:S-metric}
\theta_G(X,Y) = \tr(G^{-1} X G^{-1} Y) = G^{ij} X_{jk} G^{k\ell} Y_{\ell i}.
\end{equation}
Thinking of the fiberwise inner products as a map $G : (\mcl{B},g) \rightarrow (\mcl{S}_N,\theta)$, the term $\nabla G \otimes \nabla G$ in the evolution of $g$ is defined as \eqref{eq:phi-quad}:
\[ (\nabla G \otimes \nabla G)_{\alpha\beta}
= G^{ik} G^{j\ell} \nabla_\alpha G_{ij} \nabla_\beta G_{k\ell} \]
and $\tau_{g,\theta} G$ is the harmonic map Laplacian as in \eqref{eq:tension} (see \cite{Williams2013-hrf}*{Proposition 10} for discussion of this fact).  Regarding the $\mbb{R}^N$-valued $1$-form $A$, we think of $dA \otimes dA$ as a symmetric $2$-tensor on either $\mcl{B}$ or $\mbb{R}^N$ as follows:
\begin{align*}
(dA \otimes dA)_{\alpha\beta}
&= g^{\gamma\delta} G_{ij} (dA)_{\alpha\gamma}^i (dA)_{\beta\delta}^j \\
(dA \otimes dA)_{ij} 
&= g^{\alpha\gamma} g^{\beta\delta} G_{ik} G_{j\ell} (dA)_{\alpha\beta}^k (dA)_{\gamma\delta}^{\ell}.
\end{align*}
The operator $\delta$ is the adjoint of the exterior derivative $d$, and we pair $\nabla G$ and $dA$ as follows:
\[ \langle \nabla G,dA \rangle_\alpha^i 
= g^{\beta\gamma} G^{ij} \nabla_\gamma G_{jk} (dA)_{\beta\alpha}^k. \]

\subsection{Stability}

In this section we follow the outline given in the introduction to prove Theorem \ref{thm:irf-stab}.


Following Knopf (see \cite{Knopf2009}*{Equation (1.3)}), we transform the system in the manner we've used above into one whose fixed points include $\mbf{g}=(g_0,A_0,G_0)$, where $g_0$ is Einstein, $A_0=0$, and $G_0$ is constant.  Suppose that $\overline{\mbf{g}}(\overline{t})$ is a solution of \eqref{eq:irf} for $\overline{t} \geq 0$.  For $\lambda < 0$, let $\sigma(\overline{t}) := 1-2\lambda \overline{t}$ and consider
\[ \overline{\mbf{g}}(\overline{t}) \longmapsto \mbf{g}(t), \]
where
\[ 
t := -\frac{1}{2\lambda} \log \sigma(\overline{t}), \quad
g := \sigma^{-1} \overline{g}, \quad 
A := \sigma^{-1/2} \overline{A}, \quad
G := \overline{G}.
 \]
A calculation shows that this transformation results in the modified flow
\begin{equation}\label{eq:cnirf}
\begin{aligned}
\partial_t g &= -2 \Rc + \frac{1}{2} \nabla G \otimes \nabla G + dA \otimes dA + 2 \lambda g \\
\partial_t A &= -\delta dA + \langle \nabla G,dA \rangle + \lambda A \\
\partial_t G &= \tau_{g,\theta} G - \frac{1}{2} dA \otimes dA
\end{aligned}
\end{equation}
We call this system \textit{curvature-normalized locally $\mathbb{R}^{N}$-invariant Ricci flow}.

As before, we use the DeTurck trick to make the linear \eqref{eq:cnirf} system strictly parabolic.  Fix a background metric $g_0$ on $\mcl{B}$ (which we may take to be a fixed-point metric) and define a vector field $W$ depending on $\mbf{g}(t)$ by
\[ W^{\gamma} := g^{\alpha \beta}(\Gamma_{\alpha \beta}^{\gamma}
              - {}^{g_0} \Gamma_{\alpha \beta}^{\gamma}), \qquad
(W_\flat)_{k} := (\delta A)_k	\]
for $\gamma = 1,\dots,n$ and $k =1,\dots,N$.  Let $F_{t}$ be diffeomorphisms generated by $W(t)$, with initial condition $F_0 = \id$.  The one-parameter family of metrics $F_{t}^{\ast} \mbf{g}(t)$ is the solution of 
\begin{equation}\label{eq:cnirdtf}
\begin{aligned}
\partial_t g &= -2 \Rc + \frac{1}{2} \nabla G \otimes \nabla G + dA \otimes dA + 2 \lambda g - \mcl{L}_W g\\
\partial_t A &= -\delta dA + \langle \nabla G,dA \rangle + \lambda A - \mcl{L}_W A\\
\partial_t G &= \tau_{g,\theta} G - \frac{1}{2} dA \otimes dA - \mcl{L}_W G
\end{aligned}
\end{equation}
which we call \textit{curvature-normalized $\R^N$-invariant Ricci--DeTurck flow}.

A stationary solution of (\ref{eq:cnirf}) is also a stationary solution of the curvature-normalized Ricci-DeTurck flow, and we can describe a large class of such fixed points.

\begin{lem}\label{lem:cnirf-fixed-points}
Suppose that $(\mcl{M} = \R^N \times \mcl{B},\mbf{g})$ is a twisted principal $\R^N$-bundle with locally $\R^N$-invariant metric $\mbf{g}_0=(g_0,A_0,G_0)$.  Suppose that $(\mcl{B},g_0)$ is closed and $\lambda$-Einstein, $A_0=0$, and $G_0$ is constant.  Then $\mbf{g}_0$ a stationary solution of \eqref{eq:cnirf} and the curvature-normalized locally $\mbb{R}^N$-invariant Ricci-DeTurck flow.
\end{lem}

To analyze the stability near a fixed point, we must compute the linearization of the flow.  Write $\mathbf{g}_0 = (g_0,A_0,G_0)$ for such a fixed point.  Let $\widetilde{\mathbf{g}}(\epsilon) = \big( \widetilde{g}(\epsilon),\widetilde{A}(\epsilon),\widetilde{G}(\epsilon) \big)$
be a variation of $\mathbf{g}$ such that
\begin{equation}\label{eq:inv-var}
\widetilde{\mathbf{g}}(0) = \mathbf{g}_0, \quad 
\partial_\epsilon \big|_{\epsilon=0} \widetilde{\mathbf{g}} = \mathbf{h} = (h,B,H).
\end{equation}
More explicitly,
\[ \begin{aligned}
\widetilde{g}(0) &= g_0, 
                 &\partial_\epsilon \big|_{\epsilon=0} \widetilde{g} 
								 &= h \in S^2 \mcl{B}, \\
\widetilde{A}(0) &= 0, 
                 &\partial_\epsilon \big|_{\epsilon=0} \widetilde{A} 
								 &= B \in C^\infty(T^*B \otimes \R^N), \\
\widetilde{G}(0) &= G_0, 
                 &\partial_\epsilon \big|_{\epsilon=0} \widetilde{G} 
								 &= H \in C^\infty(G_0^* T\mcl{S}_N),
\end{aligned} \]
where $\widetilde{G}(\epsilon,b) = \exp_{G_0}(\epsilon H(b))$ as in \eqref{eq:hrf-var}.

\begin{lem}
The linearization of curvature-normalized locally $\mbb{R}^N$-invariant Ricci-DeTurck flow at a fixed point $\mathbf{g}_0=(g_0,A_0,G_0)$ where $g_0$ is $\lambda$-Einstein, $A$ vanishes, and $G_0$ constant is the autonomous, self-adjoint, strictly parabolic system
\begin{subequations}
\begin{align}
\partial_t h &= \mathbf{L}_0 h := \Delta_{\ell} h + 2\lambda h \label{eq:inv-L0} \\
\partial_t B &= \mathbf{L}_1 B := \Delta_d h + \lambda B \label{eq:inv-L1} \\
\partial_t H &= \mathbf{L}_2 H := \Delta H \label{eq:inv-L2}
\end{align}
\end{subequations}
where $-\Delta_d=d\delta+\delta d$ denotes Laplace-Beltrami operator on $\mbb{R}^N$-valued $1$-forms and $\mbf{L}_2 = \Delta$ satisfies $(\Delta H)_{ij} = \Delta (H_{ij})$ in local coordinates.
\end{lem}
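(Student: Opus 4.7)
The plan is to follow the scheme of Lemma \ref{lem:hrf-deturck-lin}. The DeTurck-modified flow is obtained from \eqref{eq:irf-trans} by subtracting, from each equation, a Lie derivative along the combined vector field on $\mcl{M}$ determined by $W$ and $W_\flat$ from \eqref{eq:inv-DeTurck1}--\eqref{eq:inv-DeTurck2}. Since the fixed point $\mbf{g}_0 = (g_0,0,G_0)$ satisfies $W(\mbf{g}_0)=0$ and $W_\flat(\mbf{g}_0)=0$, the linearization of each Lie-derivative term reduces to its first variation in $\mbf{h} = (h,B,F)$.

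For the $g$ equation, the variation of $\mcl{L}_W g$ at $g_0$ is the standard expression $\nabla_\alpha(\delta h)_\beta + \nabla_\beta(\delta h)_\alpha + \nabla_\alpha \nabla_\beta \tr_{g_0} h$, which cancels the non-parabolic terms in \eqref{eq:inv-lin-nonpar-g} and yields $\mbf{L}_2 h = \Delta_\ell h + 2\lambda h$, exactly as in Lemma \ref{lem:hrf-deturck-lin}. For the $G$ equation, both DeTurck contributions vanish at linear order: the variation of $\mcl{L}_W G$ is zero because $\nabla G_0 = 0$, and $\mcl{L}_{W_\flat} G \equiv 0$ since fiber translations preserve the fiber metric. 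Hence \eqref{eq:inv-lin-nonpar-G} is unchanged and gives $\mbf{L}_0 F = \Delta F$.

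The $A$ equation is the only genuinely new step. A vertical vector field with components $(W_\flat)_k$ acts on the connection $1$-form $A$ by a gauge transformation $A \mapsto A + d(W_\flat)$. Linearizing at $A_0 = 0$, the variation of $W_\flat = \delta A$ is $\delta B$, so this Lie derivative contributes $d(\delta B)$. Combining with the non-DeTurck linearization $-(\delta dB) + \lambda B$ from \eqref{eq:inv-lin-nonpar-A} and the convention $-\Delta_1 = d\delta + \delta d$, we obtain
\[ \mbf{L}_1 B = -(\delta d + d\delta) B + \lambda B = \Delta_1 B + \lambda B. \]
The horizontal field $W$ contributes nothing here because $\mcl{L}_W A$ vanishes to first order when $A_0 = 0$.

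Strict parabolicity and self-adjointness then follow immediately: each of $\Delta_\ell$, $\Delta_1$, and $\Delta$ has principal symbol $|\xi|^2 I$ and is formally self-adjoint with respect to its natural $L^2$-pairing, while the block-diagonal structure at $\mbf{g}_0$ decouples the three operators so that the combined system is autonomous. The main obstacle I anticipate is the gauge-theoretic bookkeeping: verifying in the principal-bundle picture of Section \ref{subsec:irf-setup} that a vertical field with components $(W_\flat)_k$ really acts on the connection by $d(W_\flat)$, so that the DeTurck cancellation converts $-\delta d$ to $\Delta_1$ correctly, and confirming that the single family of diffeomorphisms $\psi_t$ of $\mcl{M}$ built from $(W, W_\flat)$ implements the gauge-fixing on all three components $(g, A, G)$ simultaneously.
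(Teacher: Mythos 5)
Your proposal is correct and follows essentially the same route as the paper: the standard DeTurck cancellation for the $h$ equation, the vanishing of both Lie-derivative contributions for $F$ (since $\nabla G_0 = 0$), and the identification of the vertical field's action on $A$ as contributing $d(\delta B)$ at linear order, which combines with $-\delta dB$ to give $\Delta_1 B + \lambda B$. The paper's own (terse) proof records exactly this via $(\mcl{L}_W \mathbf{g})_{\alpha i} = (d\delta A)_\alpha^i$, so your gauge-transformation reading of the vertical part is the same computation.
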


\begin{proof}
This is similar to the proof of Lemma \ref{lem:hrf-deturck-lin}; most of the ``cross terms'' vanish in the linearization due to $A_0$ and $G_0$ being constant.  For the second equation, the main point is that the first term and the DeTurck term combine to give the Laplace-Beltrami operator:
\[ \partial_\epsilon\big|_{\epsilon=0} \Big( 
-\big(\delta d \widetilde{A}(\epsilon) \big)_\alpha^i - \big(\mcl{L}_W \widetilde{A}(\epsilon)\big)_\alpha^i \Big)
= -(\delta d B)_\alpha^i - (d \delta B)_\alpha^i
= \Delta_d B_\alpha^i, \]
since $(\mcl{L}_W \mathbf{g})_{\alpha i} = (d \delta A)_\alpha^i$.
\end{proof}


\begin{lem}\label{lem:inv-stable}
The trivial form $A_0=0$ is strictly linearly stable with respect to the operator $\mathbf{L}_{1}$.  The constant map $G_0$ is weakly linearly stable with respect to operator $\mathbf{L}_{2}$.  Its null eigenspace is the space of constant variations $F \in C^\infty(G_0^* T\mcl{S}_N)$, whose dimension is equal to $\dim \mcl{S}_N$.
\end{lem}

\begin{proof}
This follows in the same way as Lemmas \ref{lem:hrf-stable} and \ref{lem:wp-stable}.
\end{proof}

We now turn to the proof of the the main theorem.  Again, see the appendix for the statement of Simonett's theorem.  Recall that if $\mcl{V} \rightarrow \mcl{M}$ is a vector bundle, then $\mfr{h}^{r+\rho}(\mcl{V})$ denotes the completion of the vector space $C^\infty(\mcl{V})$ with respect to the $r+\rho$ little-H\"older norm.  For fixed $0 < \sigma < \rho < 1$, consider the following densely and continuously embedded spaces:
\begin{align*}
\mbb{E}_0 &:=\mfr{h}^{0+\sigma}(S^2 \mcl{B}) \times 
						\mfr{h}^{0+\sigma}(T^*\mcl{B} \otimes \R^N) \times 
            \mfr{h}^{0+\sigma}(G_0^*T \mcl{S}_N) \\
\cup \, & \\            
\mbb{X}_0 &:=\mfr{h}^{0+\rho}(S^2 \mcl{B}) \times  
						\mfr{h}^{0+\rho}(T^* \mcl{B} \otimes \R^N)  \times             
            \mfr{h}^{0+\rho}(G_0^*T \mcl{S}_N) \\
\cup \, & \\               
\mbb{E}_1 &:=\mfr{h}^{2+\sigma}(S^2 \mcl{B}) \times 
						\mfr{h}^{2+\sigma}(T^* \mcl{B} \otimes \R^N)  \times 
            \mfr{h}^{2+\sigma}(G_0^*T \mcl{S}_N) \\
\cup \, & \\               
\mbb{X}_1 &:=\mfr{h}^{2+\rho}(S^2 \mcl{B}) \times 
						\mfr{h}^{2+\rho}(T^* \mcl{B} \otimes \R^N)  \times 
            \mfr{h}^{2+\rho}(G_0^*T \mcl{S}_N)
\end{align*}
For fixed $1/2\leq\beta<\alpha<1$, define the continuous interpolation spaces
\begin{align*}
\mbb{X}_\beta := (\mbb{X}_0, \mbb{X}_1)_\beta,\quad
\mbb{X}_\alpha := (\mbb{X}_0, \mbb{X}_1)_\alpha.
\end{align*}
For fixed $0<\epsilon\ll 1$, let $\mbb{G}_\beta$ be the open $\epsilon$-ball around $\mbf{g}_0$ in $\mbb{X}_\beta$, and define $\mbb{G}_\alpha := \mbb{G}_\beta \cap \mbb{X}_\alpha$.

\begin{proof}[Proof of Theorem \ref{thm:irf-stab}.]
Modulo the details of the H\"older space setup, this is the same as the proof of Theorem \ref{thm:hrf-stab}, so we omit the details.
\end{proof}


\section{Connection Ricci flow}\label{sec:crf}

\subsection{Setup}\label{subsec:crf-setup}

Let $(\mcl{B}^n,g)$ be a Riemannian manifold with $n\geq 3$.  Choose local coordinates $(x^i)$ Suppose that $\tau$ is a $(2,1)$-tensor on $\mcl{B}$, and consider the $(3,0)$-tensor $H$ with components $H_{ijk}=g_{k\ell} \tau_{ij}^\ell$.  We can think of $\tau$ as the torsion of a connection $\nabla$ that is compatible with $g$, and we say that $\tau$ is \textit{geometric} if $H \in \Omega^3(\mcl{B})$ and $dH=0$.  Define a $(2,0)$-tensor $\mcl{H}$ (as above) by 
\[ \mcl{H}_{ij} = g^{pq} g^{rs} H_{ipr} H_{jqs} = g^{pq} g_{rs} \tau_{ip}^r \tau_{jq}^s.  \]
The Ricci curvature of $\nabla$ is a $(2,0)$-tensor on $\mcl{B}$, but it is not symmetric.  Therefore, consider the symmetric and anti-symmetric parts, denoted by $\Rc^\otimes$ and $\Rc^\wedge$, respectively:
\[ \Rc^\otimes = {}^g\! \Rc + \frac{1}{2} \mcl{H}, \qquad
   \Rc^\wedge  = -\frac{1}{2} d^* H \]
where ${}^g\! \Rc$ is the Ricci curvature of the Levi-Civita connection of $g$ and $d^* H$ has components $(d^* H)_{ij} = -g^{\ell m} {}^g \nabla_{\ell} H_{mij}$.

Now, one can consider the evolution of a connection ${}^{g(t)}\nabla + \tau(t)$ on $\mcl{B}$ in terms of the metric $g$ and geometric torsion $\tau$ of $\nabla$,
\begin{align*}
\partial_t g &= -2 \Rc^\otimes \\
\partial_t H &= 2d \Rc^\wedge
\end{align*}
From the expressions for the symmetric and anti-symmetric parts of $\Rc$, these equations become
\begin{align*}
\partial_t g &= -2 \Rc + \frac{1}{2} \mcl{H} \\
\partial_t H &= \Delta_d H
\end{align*}
which is the flow \eqref{eq:crf} (although we will use $H$ in place of $\tau$, for clarity).  It is easy to check that the property that $\tau$ is geometric is preserved under the flow, and that the flow enjoys short-time existence and uniqueness of solutions.

We will consider this flow where $g$ is a metric and $H$ is \textit{any} closed three-form, not necessarily dual to the torsion of a connection.  Such a coupling arises in physics, for example as the renormalization group flow with $B$-field \cites{Strominger1986,OliynykEtAl2006}.

Our goal is to show that this flow is stable when $g$ is Einstein and $H=0$.  In the context of connection Ricci flow, this is stability at the Levi-Civita connection of $g$, that is, where $\tau = 0$.

\subsection{Stability}

In this section we follow the outline given in the introduction to prove Theorem \ref{thm:crf-stab}.


We transform the system into one whose fixed points include pairs $(g,H)$ with $g$ Einstein and $H$ vanishing.  Suppose that $\big(\overline{g}(\overline{t}),\overline{H}(\overline{t})\big)$ is a solution of \eqref{eq:crf} for $\overline{t} \geq 0$.  For $\lambda < 0$, let $\sigma(\overline{t}) := 1-2\lambda \overline{t}$ and consider
\[ \big( \overline{g}(\overline{t}),\overline{H}(\overline{t}) \big) \mapsto \big( g(t), H(t) \big), \]
where
\[ t := -\frac{1}{2\lambda} \log \sigma(\overline{t}), \quad
g := \sigma^{-1} \overline{g}, \quad 
H := \overline{H}.
 \]
A straightforward calculation shows that this transformation results in the modified flow
\begin{subequations}\label{eq:cncrf}
\begin{align}
\partial_t g &= -2 \Rc + \frac{1}{2} \mcl{H} + 2\lambda g \\
\partial_t H &= \Delta_d H + 2 \lambda H
\end{align}
\end{subequations}
Call this the \textit{curvature-normalized connection-Ricci flow}.

As before, we use the DeTurck trick to make the linear \eqref{eq:cncrf} system strictly parabolic.  Let $g_0$ be a background metric and define a vector field $W$ depending on $g(t)$ by
\[ W^k := g^{ij}(\Gamma_{ij}^k
              - {}^{g_0} \Gamma_{ij}^k), \]
for $k = 1,\dots,n$.  Let $F_t$ be diffeomorphisms generated by $W(t)$, with initial condition $F_0 = \id$.  The one-parameter family $\big(F_t^* g(t),F_t^* H(t)\big)$ is the solution of 
\begin{subequations}\label{eq:cncrdtf}
\begin{align}
\partial_t g &= -2 \Rc + \frac{1}{2} \mcl{H} + 2\lambda g - \mcl{L}_W g \\
\partial_t H &= \Delta_d H + 2 \lambda H - \mcl{L}_W H
\end{align}
\end{subequations}
which we call \textit{curvature-normalized connection Ricci--DeTurck flow}.  

A stationary solution of \eqref{eq:cncrf} is also a stationary solution of \eqref{eq:cncrdtf}, and we can describe a large class of such fixed points.

\begin{lem}\label{lem:crf-fixed-points}
Suppose that $(\mcl{B},g_0)$ is closed and $\lambda$-Einstein, and that $H_0=0$.  The pair $(g_0,H_0)$ a stationary solution of \eqref{eq:cncrf} and \eqref{eq:cncrdtf}.
\end{lem}

To analyze the stability near a fixed point $(g_0,H_0)$, we must compute the linearization of the flow.  Let $\big( \widetilde{g}(\epsilon),\widetilde{H}(\epsilon)\big)$ be a variation of $(g_0,H_0)$ such that
\begin{equation}\label{eq:crf-var}
\begin{aligned}
\widetilde{g}(0) &= g_0, 
                 &\partial_\epsilon \big|_{\epsilon=0} \widetilde{g} 
								 &= h \in S^2 \mcl{B}, \\
\widetilde{H}(0) &= 0, 
                 &\partial_\epsilon \big|_{\epsilon=0} \widetilde{H} 
								 &= \eta \in \Omega^3_\mrm{closed}(\mcl{B}).
\end{aligned}
\end{equation}
The next two lemmas follow as before.

\begin{lem}
The linearization of \eqref{eq:cncrdtf} at a fixed point $(g_0,H_0)$ 
where $g_0$ is $\lambda$-Einstein and $H_0=0$ is the autonomous, self-adjoint, strictly parabolic system
\begin{subequations}
\begin{align}
\partial_t h &= \mathbf{L}_2 h := \Delta_{\ell} h + 2\lambda h \label{eq:crf-L0} \\
\partial_t \eta &= \mathbf{L}_1 \eta := \Delta_{d} \eta + 2\lambda \eta \label{eq:crf-L1}
\end{align}
\end{subequations}
\end{lem}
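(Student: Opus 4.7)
The plan is to mimic the proof of Lemma \ref{lem:hrf-deturck-lin}. The curvature-normalized connection Ricci--DeTurck flow is, by construction, \eqref{eq:crf-trans} with a Lie derivative subtracted from each equation:
\[ \partial_t g = -2\Rc + \tfrac{1}{2}\mcl{H} - sg - \mcl{L}_W g, \qquad \partial_t H = \Delta_d H - sH - \mcl{L}_W H, \]
where $W = W(g)$ is defined by \eqref{eq:crf-DeTurck}. I would therefore obtain the linearization as the linearization already found in Lemma \ref{lem:crf-first-lin} minus the linearization of each Lie derivative.

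For the metric equation, the identity
\[ \partial_\epsilon\big|_{\epsilon=0} (\mcl{L}_{\widetilde{W}} \widetilde{g})_{ij} = \nabla_i (\delta h)_j + \nabla_j (\delta h)_i + \nabla_i \nabla_j \tr_{g_0} h, \]
used verbatim in Lemma \ref{lem:hrf-deturck-lin}, exactly cancels the non-elliptic cross-terms from \eqref{eq:crf-lin-nonpar-g}, leaving $\mbf{L}_2 h = \Delta_\ell h + 2\lambda h$.

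For the three-form equation, the key observation is that $W$ depends only on $g$ and vanishes at $g_0$, while $H_0 = 0$. Since $H$ remains a closed three-form under the flow, Cartan's formula gives $\mcl{L}_W H = d \iota_W H$, so
\[ \partial_\epsilon\big|_{\epsilon=0} \mcl{L}_{\widetilde{W}} \widetilde{H} = d\bigl(\iota_{\dot{W}} H_0 + \iota_{W(g_0)} \eta\bigr) = 0. \]
Thus the DeTurck modification contributes nothing to the linearization of the $H$-equation, which reduces to $\Delta_d \eta + 2\lambda \eta$ after recording the $-s\eta = 2\lambda\eta$ coming from the curvature normalization and noting that the variation of $\Delta_d$ with respect to $g$ hits $H_0 = 0$ and therefore vanishes.

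Self-adjointness and strict parabolicity of the resulting operators then follow from the standard facts that $\Delta_\ell$ and $\Delta_d$ are each self-adjoint with respect to the natural $L^2$ pairings on symmetric two-tensors and three-forms, respectively, and that their principal symbols are both $|\xi|^2 \,\id$; the scalar $2\lambda \,\id$ affects neither property. The only delicate point is verifying that the Lie derivative correction on $H$ linearizes to zero, but once one recognizes that both $W(g_0)$ and $H_0$ vanish, every term produced by the product rule in $\partial_\epsilon|_{0} \mcl{L}_{\widetilde{W}} \widetilde{H}$ is forced to be zero, and the rest of the argument is essentially bookkeeping.
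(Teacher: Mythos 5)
Your proposal is correct and follows essentially the same route as the paper, which simply cites the proof of Lemma \ref{lem:hrf-deturck-lin}: subtract the linearized Lie derivative from the metric equation to cancel the gauge terms, and observe that the DeTurck correction to the $H$-equation linearizes to zero because both $W(g_0)$ and $H_0$ vanish. Your explicit verification of that last point (via Cartan's formula and the product rule) is a detail the paper leaves implicit, but it is exactly the intended argument.
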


\begin{lem}\label{lem:crf-stable}
The Levi-civita connection of $g_0$ is strictly linearly stable with respect to the operator $\mathbf{L}_{1}$.
\end{lem}


We now turn to the proof of Theorem \ref{thm:crf-stab}.  Again, see the appendix for the statment of Simonett's theorem.  Recall that if $\mcl{V} \rightarrow \mcl{B}$ is a vector bundle, then $\mfr{h}^{r+\rho}(\mcl{V})$ denotes the completion of the vector space $C^\infty(\mcl{V})$ with respect to the $r+\rho$ little-H\"older norm.  For fixed $0 < \sigma < \rho < 1$, consider the following densely and continuously embedded spaces:
\begin{align*}
\mbb{E}_0 &:=\mfr{h}^{0+\sigma}(S^2 \mcl{B}) \times 
						\mfr{h}^{0+\sigma}\big(\Omega_\mrm{closed}^3(\mcl{B})\big) \\
\cup \, & \\            
\mbb{X}_0 &:=\mfr{h}^{0+\rho}(S^2 \mcl{B}) \times  
						\mfr{h}^{0+\rho}\big(\Omega_\mrm{closed}^3(\mcl{B})\big) \\
\cup \, & \\               
\mbb{E}_1 &:=\mfr{h}^{2+\sigma}(S^2 \mcl{B}) \times 
						\mfr{h}^{2+\sigma}\big(\Omega_\mrm{closed}^3(\mcl{B})\big) \\
\cup \, & \\               
\mbb{X}_1 &:=\mfr{h}^{2+\rho}(S^2 \mcl{B}) \times 
						\mfr{h}^{2+\rho}\big(\Omega_\mrm{closed}^3(\mcl{B})\big)
\end{align*}
For fixed $1/2\leq\beta<\alpha<1$, define the continuous interpolation spaces
\begin{align*}
\mbb{X}_\beta := (\mbb{X}_0, \mbb{X}_1)_\beta,\quad
\mbb{X}_\alpha := (\mbb{X}_0, \mbb{X}_1)_\alpha.
\end{align*}
For fixed $0<\epsilon\ll 1$, let $\mbb{G}_\beta$ be the open $\epsilon$-ball around $(g_0,\tau_0)$ in $\mbb{X}_\beta$, and define $\mbb{G}_\alpha := \mbb{G}_\beta \cap \mbb{X}_\alpha$.

\begin{proof}[Proof of Theorem \ref{thm:crf-stab}.]
Modulo the details of the H\"older space setup, this is the same as the proof of Theorem \ref{thm:hrf-stab}, so we omit the details.  Note, however, that it only uses a special case of Simonett's theorem, since there are no center manifolds.
\end{proof}


\appendix
\section{Stability Theorem}\label{app:stab}

We use the following version of Simonett's Stability Theorem. Other versions of the theorem are found in \cites{GuentherIsenbergKnopf2002,Knopf2009}, while the most general version is found in \cite{Simonett1995}.

\begin{thm}
[Simonett]\label{thm:simonett}
Assume the following conditions hold:
\begin{enumerate}
\item \label{SimFirst}
$\mbb{X}_1 \hookrightarrow \mbb{X}_0$ and $\mbb{E}_1 \hookrightarrow \mbb{E}_0$ are continuous dense inclusions of Banach spaces.  For fixed $0<\beta<\alpha<1$, $\mbb{X}_\alpha$ and $\mbb{X}_\beta$ are continuous interpolation spaces corresponding to the inclusion $\mbb{X}_1 \hookrightarrow \mbb{X}_0$.

\item There is an autonomous quasilinear parabolic equation
\begin{equation}\label{eq:apqe}
\partial_\tau \widetilde{\mbf{g}}(\tau) = \mbf{Q}(\widetilde{\mbf{g}}(\tau)), 
\qquad(\tau \geq 0), 
\end{equation}
with the property that there exists a positive integer $k$ such that for all $\widehat{\mbf{g}}$ in some open set $\mbb{G}_\beta \subseteq \mbb{X}_\beta$, the domain $\mbb{D}(\mbf{L}_{\widehat{\mbf{g}}})$ of the linearization $\mbf{L}_{\widehat{\mbf{g}}}$ of $\mbf{Q}$ at $\widehat{\mbf{g}}$ contains $\mbb{X}_1$ and the map $\widehat{\mbf{g}} \mapsto \mbf{L}_{\widehat{\mbf{g}}}|_{\mbb{X}_1}$ belongs to $C^{k}(\mbb{G}_\beta, \mcl{L}(\mbb{X}_1,\mbb{X}_0))$.

\item For each $\widehat{\mbf{g}} \in \mbb{G}_\beta$, there exists an extension $\widehat{\mbf{L}}_{\widehat{\mbf{g}}}$ of $\mbf{L}_{\widehat{\mbf{g}}}$ to a domain $\widehat{\mbb{D}}(\widehat{\mbf{g}})$ that contains $\mbb{E}_{1}$ (hence is dense in
$\mbb{E}_0$).

\item \label{Sectorial} For each $\widehat{\mbf{g}} \in \mbb{G}_\alpha = \mbb{G}_\beta \cap \mbb{X}_\alpha$, $\widehat{\mbf{L}}_{\widehat{\mbf{g}}}|_{\mbb{E}_1} \in \mcl{L}(\mbb{E}_1,\mbb{E}_0)$ generates a strongly-continuous analytic semigroup on
$\mcl{L}(\mbb{E}_0,\mbb{E}_0)$. (Observe that for $\widehat{\mbf{g}}\in\mbb{G}_\alpha$, this implies that $\widehat{\mbb{D}}(\widehat{\mbf{g}})$ becomes a Banach space when equipped with the graph norm with respect to $\mbb{E}_0$.)

\item For each $\widehat{\mbf{g}} \in \mbb{G}_\alpha$, $\mbf{L}_{\widehat{\mbf{g}}}$ is the part of $\widehat{\mbf{L}}_{\widehat{\mbf{g}}}$ in $\mbb{X}_0$.\footnote{If $\mbb{X}$ is a Banach space with subspace $\mbb{Y}$ and $L : D(L) \subseteq \mbb{X} \rightarrow \mbb{X}$ is linear, then $L^{\mbb{Y}}$, \textit{the part of }$L$\textit{ in }$\mbb{Y}$, is defined by the action $L^{\mbb{Y}} : x \mapsto Lx$ on the domain $D(L^{\mbb{Y}})=\{x\in D(L):Lx\in\mbb{Y\}}$.}

\item \label{CIspaces} For each $\widehat{\mbf{g}} \in \mbb{G}_\alpha$, there
exists $\theta \in (0,1)$ such that $\mbb{X}_0 \cong (\mbb{E}_0,\widehat{\mbb{D}}(\widehat{\mbf{g}}))_{\theta}$ and $\mbb{X}_1 \cong (\mbb{E}_0,\widehat{\mbb{D}}(\widehat{\mbf{g}}))_{1+\theta}$, where $(\mbb{E}_0,\widehat{\mbb{D}}(\widehat{\mbf{g}}))_{1+\theta} = \{\mbf{g}\in\widehat{\mbb{D}}(\widehat{\mbf{g}}) : \widehat{\mbf{L}}_{\widehat{\mbf{g}}}(\mbf{g}) \in (\mbb{E}_0,\widehat{\mbb{D}}(\widehat{\mbf{g}}))_{\theta}\}$ as a set, endowed with the graph norm of $\widehat{\mbf{L}}_{\widehat{\mbf{g}}}$ with respect to $(\mbb{E}_{0},\widehat{\mbb{D}}(\widehat{\mbf{g}}))_{\theta}$.

\item \label{SimLast} $\mbb{E}_1 \hookrightarrow \mbb{X}_\beta \hookrightarrow \mbb{E}_0$ is a continuous and dense inclusion such that there exist $C>0$ and $\delta \in (0,1)$ such that for all $\eta \in \mbb{E}_1$, one has
\[ \| \eta \|_{\mbb{X}_\beta} \leq C \| \eta \|_{\mbb{E}_0}^{1-\delta} \| \eta \|_{\mbb{E}_1}^\delta. \]
\end{enumerate}

Let $\mbf{L}_{\mbf{g}}^{\mbb{C}}$ denote the complexification of the linearization $\mbf{L}_{\mbf{g}}$ of (\ref{eq:apqe}) at a stationary solution $\mbf{g}$ of (\ref{eq:apqe}
).\footnote{Note that $\mbf{L}_{\mbf{g}}$ is the operator that appears in Assumption~2.} Suppose there exists $\lambda_\mrm{s}>0$ such that the spectrum $\mbf{\sigma}$ of $\mbf{L}_{\mbf{g}}^{\mbb{C}}$ admits the decomposition $\sigma = \sigma_{\mrm{s}} \cup \{0\}$, where $0$ is an eigenvalue of finite multiplicity and $\sigma_{\mrm{s}} \subseteq \left\{z:\operatorname{Re}z \leq - \lambda_{\mrm{s}} \right\}$.  If the above Assumptions hold, then:

\begin{enumerate}
\item For each $\alpha \in [0,1]$, there is a direct-sum decomposition $\mbb{X}_\alpha = \mbb{X}_\alpha^{\mrm{s}} \oplus \mbb{X}_\alpha^{\mrm{c}}$, where $\mbb{X}_{\alpha}^{\mrm{c}}$ is the finite-dimensional algebraic eigenspace corresponding to the null eigenvalue of $\mbf{L}_{\mbf{g}}^{\mbb{C}}$.

\item For each $r \in \mbb{N}$, there exists $d_{r}>0$ such that for all $d\in(0,d_{r}]$, there exists a bounded $C^{r}$ map $\gamma_{d}^{r} : B(\mbb{X}_{1}^{\mrm{c}},\mbf{g},d) \rightarrow \mbb{X}_{1}^{\mrm{s}}$ such that $\gamma_{d}^{r}(\mbf{g}) = 0$ and $D\gamma
_{d}^{r}(\mbf{g}) = 0$.  The image of $\gamma_{d}^{r}$ lies in the closed ball $\bar{B}(\mbb{X}_{1}^{\mrm{s}},\mbf{g},d)$.  Its graph is a local $C^{r}$ center manifold $\Gamma_{\mrm{loc}}^{r} = \{(h,\gamma_{d}^{r}(h)) : h \in B(\mbb{X}_1^{\mrm{c}},\mbf{g},d)\} \subset \mbb{X}_1$ satisfying $T_{\mbf{g}}\Gamma_{\mrm{loc}}^{r} \cong \mbb{X}_1^{\mrm{c}}$.  Moreover, $\Gamma_{\mrm{loc}}^{r}$ is invariant for solutions of (\ref{eq:apqe}) as long as they remain in $B(\mbb{X}_{1}^{\mrm{c}},\mbf{g},d) \times B(\mbb{X}_1^{\mrm{s}},0,d)$.

\item Fix $\lambda \in (0,\lambda_{\mrm{s}})$. Then for each $\alpha \in (0,1)$, there exist $C>0$ and $d\in(0,d_{r}]$ such that for each initial datum $\widetilde{\mbf{g}}(0) \in B(\mbb{X}_\alpha,\mbf{g},d)$ and all times $\tau \geq 0$ such that $\widetilde{\mbf{g}}(\tau)\in B(\mbb{X}_\alpha,\mbf{g},d)$, the center manifold $\Gamma_{\mrm{loc}}^{r}$ is exponentially attractive in the stronger space $\mbb{X}_1$ in the sense that
\[ \| \pi^{\mrm{s}} \widetilde{\mbf{g}}(\tau) -  \gamma_{d}^{r}(\pi^{\mrm{c}}\widetilde{\mbf{g}}(\tau))\|_{\mbb{X}_1} 
\leq \frac{C_{\alpha}}{\tau^{1-\alpha}} e^{-\lambda\tau}
\| \pi^{\mrm{s}}\widetilde{\mbf{g}}(0) - \gamma_{d}^{r}(\pi^{\mrm{c}}\widetilde{\mbf{g}}(0)) \|_{\mbb{X}_\alpha}. \]
Here, $\widetilde{\mbf{g}}(\tau)$ is the unique solution of (\ref{eq:apqe}), while $\pi^{\mrm{s}}$ and $\pi^{\mrm{c}}$ denote the projections onto $\mbb{X}_{\alpha}^{\mrm{s}} \cong(\mbb{X}_{1}^{\mrm{s}},\mbb{X}_{0}^{\mrm{s}})_{\alpha}$ and $\mbb{X}_{\alpha}^{\mrm{c}}$, respectively.
\end{enumerate}
\end{thm}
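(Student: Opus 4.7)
The plan is to follow Simonett's original approach, which builds dynamical stability on top of Da Prato--Grisvard maximal regularity in continuous interpolation spaces. First, I would use assumption (\ref{Sectorial}) to see that $\widehat{\mbf{L}}_{\widehat{\mbf{g}}}$ generates a strongly continuous analytic semigroup on $\mbb{E}_0$ for each $\widehat{\mbf{g}} \in \mbb{G}_\alpha$; assumption (5) then guarantees that its ``part in $\mbb{X}_0$'' is exactly $\mbf{L}_{\widehat{\mbf{g}}}$, so the generated semigroup descends coherently to the $\mbb{X}$-scale. The continuous-interpolation identifications in (\ref{CIspaces}) place us in the precise functional-analytic framework of Da Prato--Grisvard, which yields maximal regularity: for $f \in C^\sigma([0,T];\mbb{X}_0)$, the linear Cauchy problem $\partial_\tau u = \mbf{L}_{\mbf{g}} u + f$ has a unique solution in $C^{1+\sigma}([0,T];\mbb{X}_0) \cap C^\sigma([0,T];\mbb{X}_1)$ with linear norm bounds.

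Next, I would exploit the spectral gap hypothesis $\sigma(\mbf{L}_{\mbf{g}}^{\mbb{C}}) = \sigma_\mrm{s} \cup \{0\}$ with $\sigma_\mrm{s} \subseteq \{\operatorname{Re} z \leq -\lambda_\mrm{s}\}$. Since $0$ is isolated with finite multiplicity, the Dunford integral provides commuting spectral projectors $\pi^\mrm{c}, \pi^\mrm{s}$, yielding a direct-sum decomposition $\mbb{X}_\alpha = \mbb{X}_\alpha^\mrm{c} \oplus \mbb{X}_\alpha^\mrm{s}$ preserved by the semigroup. On the stable part, the analytic semigroup satisfies $\|e^{\tau \mbf{L}_{\mbf{g}}^\mrm{s}}\|_{\mcl{L}(\mbb{X}_\alpha^\mrm{s},\mbb{X}_1^\mrm{s})} \leq C \tau^{\alpha-1} e^{-\lambda \tau}$ for any $\lambda \in (0,\lambda_\mrm{s})$, which is the source of the claimed decay rate. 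This step is essentially a transfer of standard semigroup spectral theory through the interpolation isomorphisms of (\ref{CIspaces}).

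The heart of the argument is the Lyapunov--Perron construction of the local center manifold. Writing $\mbf{Q}(\widetilde{\mbf{g}}) = \mbf{L}_{\mbf{g}}(\widetilde{\mbf{g}} - \mbf{g}) + \mbf{N}(\widetilde{\mbf{g}} - \mbf{g})$ with $\mbf{N}$ the nonlinear remainder (for which $\mbf{N}(0)=0$ and $D\mbf{N}(0)=0$), I would introduce a smooth cutoff $\chi_d$ supported in the ball $B(\mbb{X}_1,0,d)$ so that $\mbf{N}_d := \chi_d \mbf{N}$ is globally Lipschitz with constant $O(d)$. For each $h \in B(\mbb{X}_1^\mrm{c}, 0, d)$, I would seek a bounded orbit $u : [0,\infty) \to \mbb{X}_1$ of the modified equation with $\pi^\mrm{c} u(0) = h$, recast via the variation-of-constants formula as a fixed-point problem
\begin{equation*}
u(\tau) = e^{\tau \mbf{L}_{\mbf{g}}^\mrm{c}} h + \int_0^\tau e^{(\tau-s)\mbf{L}_{\mbf{g}}^\mrm{c}} \pi^\mrm{c} \mbf{N}_d(u(s)) \, ds - \int_\tau^\infty e^{(\tau-s)\mbf{L}_{\mbf{g}}^\mrm{s}} \pi^\mrm{s} \mbf{N}_d(u(s)) \, ds
\end{equation*}
in an exponentially weighted space $C_\eta([0,\infty); \mbb{X}_1)$ with $0 < \eta < \lambda_\mrm{s}$. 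Contraction in this space (thanks to the small Lipschitz constant) defines $\gamma_d^r(h) := \pi^\mrm{s} u(0)$. The $C^r$ regularity of $\gamma_d^r$ and its vanishing first jet at $0$ follow from the $C^k$ dependence of $\mbf{L}_{\widehat{\mbf{g}}}$ in assumption (2) and the smoothness of fixed points of contraction mappings with respect to parameters.

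Finally, the exponential attraction estimate follows by writing an arbitrary solution as $\widetilde{\mbf{g}} = \gamma_d^r(\pi^\mrm{c} \widetilde{\mbf{g}}) + v$, deriving the equation $\partial_\tau v = \mbf{L}_{\mbf{g}}^\mrm{s} v + \mbf{R}(\tau)$ with superlinearly small remainder, and applying the stable-semigroup decay together with the interpolation inequality (\ref{SimLast}) to close a Gronwall-type estimate in $\mbb{X}_\alpha$. The main obstacle, in my view, is the delicate choice of function spaces: the $\mbb{E}$-scale is where the generator is sectorial and the semigroup lives comfortably, while the $\mbb{X}$-scale is where the quasilinear nonlinearity $\mbf{Q}$ is differentiable; the interpolation estimate in (\ref{SimLast}) plus the continuous-interpolation identifications in (\ref{CIspaces}) are precisely what allow one to shuttle estimates between the two scales and make the contraction argument uniform in the base point $\widehat{\mbf{g}} \in \mbb{G}_\beta$. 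Without these two compatibility assumptions, neither the maximal regularity nor the smooth dependence needed for the $C^r$ center manifold would survive the passage between scales.
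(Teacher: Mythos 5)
First, a point of comparison: the paper offers no proof of this statement. Theorem \ref{thm:simonett} is quoted as background, with the reader referred to \cite{GuentherIsenbergKnopf2002}, \cite{Knopf2009}, and \cite{Simonett1995}, so the only meaningful benchmark is Simonett's original argument. Your outline does reproduce its architecture: Da Prato--Grisvard maximal regularity in the continuous interpolation scale, spectral projections from the Dunford integral around the isolated null eigenvalue, a cut-off Lyapunov--Perron fixed point for the center manifold, and an attraction estimate shuttled between the $\mbb{E}$- and $\mbb{X}$-scales via conditions (\ref{CIspaces}) and (\ref{SimLast}). That is the right skeleton.

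However, the fixed-point equation you place at the heart of the construction is not well posed. The term $-\int_\tau^\infty e^{(\tau-s)\mbf{L}_{\mbf{g}}^{\mrm{s}}}\pi^{\mrm{s}}\mbf{N}_d(u(s))\,ds$ evaluates the stable semigroup at the negative times $\tau-s<0$. On the infinite-dimensional stable subspace, $e^{t\mbf{L}_{\mbf{g}}^{\mrm{s}}}$ is an analytic semigroup, not a group; it cannot be run backward, and even formally the backward flow grows like $e^{\lambda_{\mrm{s}}(s-\tau)}$, so the integral does not converge. What you have written is the Lyapunov--Perron operator for a \emph{stable} manifold in the presence of an unstable spectral part (there the backward integral hits the unstable semigroup, which decays in reverse time). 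For the center(-unstable) manifold relevant here, one instead characterizes points of $\Gamma^r_{\mrm{loc}}$ by orbits defined for $\tau\le 0$ with at most subexponential backward growth, and recovers the stable component from $\int_{-\infty}^{\tau} e^{(\tau-s)\mbf{L}_{\mbf{g}}^{\mrm{s}}}\pi^{\mrm{s}}\mbf{N}_d(u(s))\,ds$, which only ever invokes the stable semigroup at nonnegative times; the center part $e^{\tau\mbf{L}_{\mbf{g}}^{\mrm{c}}}$ may be run in both directions because $\mbb{X}^{\mrm{c}}$ is finite dimensional. A second, smaller caveat: because the equation is quasilinear rather than semilinear, the splitting $\mbf{Q}=\mbf{L}_{\mbf{g}}+\mbf{N}$ does not yield a remainder that is a small Lipschitz perturbation in a single fixed space, which is precisely why Simonett routes the entire construction through the maximal-regularity solution operator on the interpolation scale rather than through a bare variation-of-constants formula.
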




 \begin{bibdiv}
 \begin{biblist}
 
 \bib{Besse2008}{book}{
       author={Besse, Arthur~L.},
        title={Einstein manifolds},
       series={Classics in Mathematics},
    publisher={Springer-Verlag},
      address={Berlin},
         date={2008},
         ISBN={978-3-540-74120-6},
         note={Reprint of the 1987 edition},
 }
 
 \bib{BohmWilking2008}{article}{
       author={B{\"o}hm, Christoph},
       author={Wilking, Burkhard},
        title={Manifolds with positive curvature operators are space forms},
         date={2008},
         ISSN={0003-486X},
      journal={Ann. of Math. (2)},
       volume={167},
       number={3},
        pages={1079\ndash 1097},
   url={http://dx.doi.org.ezproxy.lib.utexas.edu/10.4007/annals.2008.167.1079},
 }
 
 \bib{ChowKnopf2004}{book}{
       author={Chow, Bennett},
       author={Knopf, Dan},
        title={The {R}icci flow: an introduction},
       series={Mathematical Surveys and Monographs},
    publisher={American Mathematical Society},
      address={Providence, RI},
         date={2004},
       volume={110},
         ISBN={0-8218-3515-7},
 }
 
 \bib{DaPratoGrisvard1979}{article}{
       author={Da~Prato, Giuseppe},
       author={Grisvard, Pierre},
        title={Equations d'\'evolution abstraites non lin\'eaires de type
   parabolique},
         date={1979},
         ISSN={0003-4622},
      journal={Ann. Mat. Pura Appl. (4)},
       volume={120},
        pages={329\ndash 396},
          url={http://dx.doi.org/10.1007/BF02411952},
 }
 
 \bib{DaiWangWei2007}{article}{
       author={Dai, Xianzhe},
       author={Wang, Xiaodong},
       author={Wei, Guofang},
        title={On the variational stability of {K}\"ahler-{E}instein metrics},
         date={2007},
      journal={Comm. Anal. Geom.},
       volume={15},
       number={4},
        pages={669\ndash 693},
 }
 
 \bib{DobarroUnal2005}{article}{
    author={Dobarro, Fernando},
    author={\"{U}nal, B\"{u}lent},
		 title={Curvature of multiply warped products},
   journal={J. Geom. Phys.},
    volume={55},
      date={2005},
    number={1},
     pages={75\ndash 106},      
}

 \bib{EellsSampson1964}{article}{
       author={Eells, James, Jr.},
       author={Sampson, J.~H.},
        title={Harmonic mappings of {R}iemannian manifolds},
         date={1964},
         ISSN={0002-9327},
      journal={Amer. J. Math.},
       volume={86},
        pages={109\ndash 160},
 }
 
 \bib{GuentherIsenbergKnopf2002}{article}{
       author={Guenther, Christine},
       author={Isenberg, James},
       author={Knopf, Dan},
        title={Stability of the {R}icci flow at {R}icci-flat metrics},
         date={2002},
         ISSN={1019-8385},
      journal={Comm. Anal. Geom.},
       volume={10},
       number={4},
        pages={741\ndash 777},
 }
 
 \bib{Hamilton1982}{article}{
       author={Hamilton, Richard~S.},
        title={Three-manifolds with positive {R}icci curvature},
         date={1982},
         ISSN={0022-040X},
      journal={J. Differential Geom.},
       volume={17},
       number={2},
        pages={255\ndash 306},
   url={http://projecteuclid.org.ezproxy.lib.utexas.edu/getRecord?id=euclid.jdg/1214436922},
 }
 
 \bib{Hamilton1986}{article}{
       author={Hamilton, Richard~S.},
        title={Four-manifolds with positive curvature operator},
         date={1986},
         ISSN={0022-040X},
      journal={J. Differential Geom.},
       volume={24},
       number={2},
        pages={153\ndash 179},
          url={http://projecteuclid.org/getRecord?id=euclid.jdg/1214440433},
 }
 
 \bib{Knopf2009}{article}{
       author={Knopf, Dan},
        title={Convergence and stability of locally {$\mathbb{R}^N$}-invariant
   solutions of {R}icci flow},
         date={2009},
         ISSN={1050-6926},
      journal={J. Geom. Anal.},
       volume={19},
       number={4},
        pages={817\ndash 846},
   url={http://dx.doi.org.ezproxy.lib.utexas.edu/10.1007/s12220-009-9091-x},
 }
 
 \bib{KnopfYoung2009}{article}{
       author={Knopf, Dan},
       author={Young, Andrea},
        title={Asymptotic stability of the cross curvature flow at a hyperbolic
   metric},
         date={2009},
         ISSN={0002-9939},
      journal={Proc. Amer. Math. Soc.},
       volume={137},
       number={2},
        pages={699\ndash 709},
          url={http://dx.doi.org/10.1090/S0002-9939-08-09534-8},
 }
 
 \bib{Koiso1978}{article}{
       author={Koiso, Norihito},
        title={Nondeformability of {E}instein metrics},
         date={1978},
         ISSN={0030-6126},
      journal={Osaka J. Math.},
       volume={15},
       number={2},
        pages={419\ndash 433},
          url={http://projecteuclid.org/getRecord?id=euclid.ojm/1200771282},
 }
 
 \bib{List2008}{article}{
       author={List, Bernhard},
        title={Evolution of an extended {R}icci flow system},
         date={2008},
         ISSN={1019-8385},
      journal={Comm. Anal. Geom.},
       volume={16},
       number={5},
        pages={1007\ndash 1048},
 }
 
 \bib{Lott2007}{article}{
       author={Lott, John},
        title={On the long-time behavior of type-{III} {R}icci flow solutions},
         date={2007},
         ISSN={0025-5831},
      journal={Math. Ann.},
       volume={339},
       number={3},
        pages={627\ndash 666},
          url={http://dx.doi.org/10.1007/s00208-007-0127-x},
 }
 
 \bib{Lott2010}{article}{
       author={Lott, John},
        title={Dimensional reduction and the long-time behavior of {R}icci
   flow},
         date={2010},
         ISSN={0010-2571},
      journal={Comment. Math. Helv.},
       volume={85},
       number={3},
        pages={485\ndash 534},
          url={http://dx.doi.org.ezproxy.lib.utexas.edu/10.4171/CMH/203},
 }
 
 \bib{LottSesum2011}{article}{
       author={Lott, John},
       author={Sesum, Natasa},
        title={Ricci flow on three-dimensional manifolds with symmetry},
      journal={Comm. Math. Helv., to appear},
 }
 
 \bib{Muller2012}{article}{
       author={M\"{u}ller, Reto},
        title={Ricci flow coupled with harmonic map flow},
         date={2012},
      journal={Ann. Sci. Ec. Norm. Sup. (4)},
       volume={45},
       number={1},
        pages={101\ndash 142},
 }
 
 \bib{OliynykEtAl2006}{article}{
       author={Oliynyk, T.},
       author={Suneeta, V.},
       author={Woolgar, E.},
        title={A gradient flow for worldsheet nonlinear sigma models},
         date={2006},
         ISSN={0550-3213},
      journal={Nuclear Phys. B},
       volume={739},
       number={3},
        pages={441\ndash 458},
          url={http://dx.doi.org/10.1016/j.nuclphysb.2006.01.036},
       review={\MR{2214659 (2006m:81185)}},
 }
 
 \bib{Sesum2006}{article}{
       author={Sesum, Natasa},
        title={Linear and dynamical stability of {R}icci-flat metrics},
         date={2006},
         ISSN={0012-7094},
      journal={Duke Math. J.},
       volume={133},
       number={1},
        pages={1\ndash 26},
          url={http://dx.doi.org/10.1215/S0012-7094-06-13311-2},
 }
 
 \bib{Simon2000}{article}{
       author={Simon, Miles},
        title={A class of {R}iemannian manifolds that pinch when evolved by
   {R}icci flow},
         date={2000},
         ISSN={0025-2611},
      journal={Manuscripta Math.},
       volume={101},
       number={1},
        pages={89\ndash 114},
          url={http://dx.doi.org/10.1007/s002290050006},
 }
 
 \bib{Simonett1995}{article}{
       author={Simonett, Gieri},
        title={Center manifolds for quasilinear reaction-diffusion systems},
         date={1995},
         ISSN={0893-4983},
      journal={Differential Integral Equations},
       volume={8},
       number={4},
        pages={753\ndash 796},
 }
 
 \bib{Streets2008}{article}{
       author={Streets, Jeffrey},
        title={Regularity and expanding entropy for connection {R}icci flow},
         date={2008},
         ISSN={0393-0440},
      journal={J. Geom. Phys.},
       volume={58},
       number={7},
        pages={900\ndash 912},
          url={http://dx.doi.org/10.1016/j.geomphys.2008.02.010},
 }
 
 \bib{Strominger1986}{article}{
       author={Strominger, Andrew},
        title={Superstrings with torsion},
         date={1986},
         ISSN={0550-3213},
      journal={Nuclear Phys. B},
       volume={274},
       number={2},
        pages={253\ndash 284},
          url={http://dx.doi.org/10.1016/0550-3213(86)90286-5},
       review={\MR{851702 (87m:81177)}},
 }
 
 \bib{Tran2012}{article}{
       author={Tran, Hung},
        title={Harnack estimates for {R}icci flow on a warped product},
         date={2012},
       eprint={arXiv:1211.6448},
 }
 
 \bib{Williams2013-hrf}{article}{
       author={Williams, Michael~Bradford},
        title={Results on coupled {R}icci and harmonic map flows},
         date={2010},
       eprint={arXiv:1012.0291},
 }
 
 \bib{WilliamsWu2013-dynamical}{article}{
       author={Wu, Haotian},
        title={Dynamical stability of algebraic {R}icci solitons},
     journal={J. Reine Angew. Math., to appear},
 }

 \bib{Wu2013}{article}{
       author={Wu, Haotian},
        title={Stability of complex hyperbolic space under curvature-normalized
   {R}icci flow},
     journal={Geom. Dedicata},
		date={2013},
		pages={231\ndash 258},
		volume={164},
 }
 
 \bib{Ye1993}{article}{
       author={Ye, Rugang},
        title={Ricci flow, {E}instein metrics and space forms},
         date={1993},
         ISSN={0002-9947},
      journal={Trans. Amer. Math. Soc.},
       volume={338},
       number={2},
        pages={871\ndash 896},
          url={http://dx.doi.org/10.2307/2154433},
 }
 
 \bib{Young2010}{article}{
       author={Young, Andrea},
        title={Stability of {R}icci {Y}ang-{M}ills flow at {E}instein
   {Y}ang-{M}ills metrics},
         date={2010},
         ISSN={1019-8385},
      journal={Comm. Anal. Geom.},
       volume={18},
       number={1},
        pages={77\ndash 100},
 }
 
 \end{biblist}
 \end{bibdiv}

\end{document}